\date{\today}
\newcounter{prcounter}
\newcommand{\lf}{\left}
\newcommand{\ri}{\right}
\newcommand{\f}{\frac} 
\newcommand{\into}{\hookrightarrow}
\newcommand{\onto}{\twoheadrightarrow}
\newcommand{\wh}{\widehat}
\DeclareMathOperator{\sgn}{sgn}
\DeclareMathOperator{\rank}{rank}
\DeclareMathOperator{\Out}{Out}
\DeclareMathOperator{\Res}{Res}
\DeclareMathOperator{\Ind}{Ind}
\DeclareMathOperator{\vol}{vol}
\DeclareMathOperator{\tr}{tr}
\newcommand{\Ld}[1]{{}^L\!{#1}}
\newcommand{\m}[1]{\mathbf{#1}}
\newcommand{\mf}[1]{\mathfrak{#1}}
\newcommand{\mc}[1]{\mathcal{#1}}
\newcommand{\td}[1]{\tilde{#1}}
\newcommand{\C}{\mathbb C}
\newcommand{\R}{\mathbb R}
\newcommand{\Q}{\mathbb Q}
\newcommand{\Z}{\mathbb Z}
\newcommand{\A}{\mathbb A}
\newcommand{\SL}{\mathrm{SL}}
\newcommand{\SO}{\mathrm{SO}}
\newcommand{\GL}{\mathrm{GL}}
\newcommand{\PGL}{\mathrm{PGL}}
\newcommand{\SU}{\mathrm{SU}}
\newcommand{\eps}{\epsilon}
\newcommand{\om}{\omega}
\newcommand{\lb}{\lambda}
\newcommand{\Om}{\Omega}
\newcommand{\bs}{\backslash}
\newcommand{\1}{\m 1}
\newcommand{\disc}{\mathrm{disc}}
\newcommand{\spec}{\mathrm{spec}}
\newcommand{\geom}{\mathrm{geom}}
\newcommand{\el}{\mathrm{ell}}
\newcommand{\ur}{\mathrm{ur}}
\newcommand{\ssm}{\mathrm{ss}}
\newcommand{\inv}{\mathrm{inv}}
\DeclareMathOperator{\st}{\mathrm{st}}
\newcommand{\cusp}{\mathrm{cusp}}
\newcommand{\Mot}{\mathrm{Mot}}
\newtheorem{thm}{Theorem}[subsection]
\newtheorem{prop}[thm]{Proposition}
\newtheorem{cor}[thm]{Corollary}
\newtheorem{lem}[thm]{Lemma}
\theoremstyle{remark}
\newtheorem*{note}{Note}
\theoremstyle{definition}
\newtheorem*{dfn}{Definition}
\title[Discrete, Quaternionic Automorphic Reps. on $G_2$]{Counting Discrete, Level-$1$, Quaternionic Automorphic Representations on $G_2$}
\author{Rahul Dalal}
\begin{document}

\begin{abstract}
Quaternionic automorphic representations are one attempt to generalize to other groups the special place holomorphic modular forms have among automorphic representations of $\GL_2$. Here, we use ``hyperendoscopy'' techniques to develop a general trace formula and understand them on an arbitrary group. Then we specialize this general formula to study quaternionic automorphic representations on the exceptional group $G_2$, eventually getting an analog of the Eichler-Selberg trace formula for classical modular forms. We finally use this together with some  techniques of  Chenevier, Renard, and Ta\"ibi to compute dimensions of spaces of level-$1$ quaternionic representations. On the way, we prove a Jacquet-Langlands-style result describing them in terms of classical modular forms and automorphic representations on the compact-at-infinity form $G_2^c$. 

The main technical difficulty is that the quaternionic discrete series that quaternionic automorphic representations are defined in terms of do not satisfy a condition of being ``regular''. A real representation theory argument shows that regularity miraculously does not matter for specifically the case of quaternionic discrete series. 

We hope that the techniques and shortcuts highlighted in this project are of interest in other computations about discrete-at-infinity automorphic representations on arbitrary reductive groups instead of just classical ones.
\end{abstract}

\maketitle

\tableofcontents

\section{Introduction}
\subsection{Context}
This work first develops an ``explicit'' trace formula \eqref{mainformula} to study so-called quaternionic automorphic representations in general and then specializes it to describe level-$1$, discrete, quaternionic automorphic representations on $G_2$. Let $\mc Q_1(k)$ be the set of such representations of weight $k$ counted with multiplicity. For each $k>2$, we give a formula, \eqref{finalformula}, for $|\mc Q_1(k)|$ in terms of counts of automorphic representations on the compact-at-infinity inner form $G_2^c$ that were calculated by Chenevier and Renard in \cite{CR15} (In case the use of counts on $G_2^c$ is bothersome, section \ref{explicit} also gives relatively short closed-form formula, though this is less conceptually enlightening). We also give a Jacquet-Langlands-style result (corollary \ref{JLG2}) describing all elements of $\mc Q_1(k)$ in terms of certain automorphic representations on $G_2^c$ and certain pairs of classical modular forms.

Quaternionic automorphic representations are one way to generalize to other groups the special place holomorphic modular forms have among automorphic representations of $\GL_2$. Just like holomorphic modular forms, they are characterized by their infinite component being in a particular nice class of discrete series representations: the quaternionic discrete series of \cite{GW96}. Just like modular forms, they also have many unexpected applications and connections to other areas of mathematics. For example, they have a nice theory of Fourier expansions with interesting arithmetic content---this was described for $G_2$ in \cite{GGS02} and generalized to all exceptional groups in \cite{Pol20}. They also somehow appear in certain string theory computations involving black holes---see the end of chapter 15 in \cite{FGK18} for example. Quaternionic forms have been studied a lot by Pollack: see \cite{Pol21} for an introductory article on them and \cite{Pol18} for good exposition specifically on $G_2$-quaternionic forms. 

We study discrete, quaternionic representations in general using the trace formula: Arthur's invariant trace formula, as in \cite{Art89}, lets us analyze automorphic representations with infinite component contained in a fixed discrete series $L$-packet. However, quaternionic discrete series appear in $L$-packets with non-quaternionic members and therefore cannot be isolated within the packet without further techniques. A previous work, \cite{Dal22}, uses the stabilization of the trace formula to abstractly isolate members of the $L$-packet and prove general asymptotic bounds. Here, we demonstrate that the same techniques suffice for computing more explicit information. 

As one technical point of interest, there is a particular miracle about quaternionic discrete series that crucially underpins the result. A priori, quaternionic discrete series are not regular, implying that there may not be a compact test function at infinity whose trace picks out exactly a quaternionic discrete series without also picking up some unwanted contributions from non-tempered representations. This would preclude the use of easy trace formula arguments. However, it turns out that specifically quaternionic discrete series don't get entangled in this way, even though other members of their $L$-packet do. The proof is a computation in real representation theory. 

Applying these generalities to $G_2$ lets us develop an ``Eicher-Selberg''-style trace formula for quaternionic discrete series there. The counts of level-$1$ forms and Jacquet-Langlands-style results come from computations with it. The level-$1$ computation in particular also relies heavily on powerful shortcuts developed in \cite{CR15} and \cite{Tai17} to get exact counts of level-$1$ automorphic representations on classical groups. 

This work can be be compared to the much more difficult aforementioned papers of Chenevier, Renard, and Ta\"ibi counting level-$1$ representations with arbitrary discrete series at infinity on classical groups. Those avoided needing our real representation theory miracle by using the extremeley powerful endoscopic classification of \cite{Art13} which, in essence, gave a finer decomposition of the trace formula than pure stabilization. In particular, it allowed the isolation of summands in the trace formula that did not contain any contributions from automorphic representations with non-tempered local components through a far more complicated inductive procedure. Unfortunately, there is no endoscopic classification currently available for $G_2$.

The methods here should be able to also compute averages of Hecke eigenvalues. We also hope that the computation highlights enough general methods and shortcuts to be helpful for people interested in doing explicit computations with discrete-at-infinity automorphic representations whenever an endoscopic classification might either be unavailable or be too complicated to use. See the end of section \ref{geomfinal} for some comments on this. In particular, a very similar method, albeit with more complicated computation at infinity, should be able to ``quickly'' count the quaternionic forms on type $D_4$ studied by Martin Weissman in \cite{Wei06}. 

\subsection{Summary}
We start with the case of general groups with quaternionic discrete series at infinity. Section \ref{qdsdef} discusses quaternionic discrete series and their properties, culminating in proposition \ref{qpstrace} showing that they satisfy a property of being ``trace-distinguishable''. This is used in theorem \ref{qspec} to show that the spectral side of Arthur's invariant trace formula can be made equal to the trace of any desired finite-place test function against the space of all quaternionic representations of a fixed weight. 

The second part of section \ref{geomG2}  uses ``hyperendoscopy'' from \cite{Fer07} to simplify the geometric side and develops an expression \eqref{mainformula} for this trace that is explicit up to computing endoscopic transfers and orbital integrals. We conclude with some remarks on how to do the necessary computations and outline how they simplify in the special case of level-$1$ on $G_2$. This special case of $G_2$ then takes up the rest of the paper. 

Specifically, after some set-up work in section \ref{g2setup}, we work out what formula \eqref{mainformula} reduces to in the unramified case for $G_2$ in section \ref{gc} using a computation of the endoscopy of $G_2$. Instead of using formula \eqref{mainformula} directly, we compare it applied to $G_2$ to it applied to the compact real form $G_2^c$ to construct a formula for $I_\spec^{G_2}$ involving just $I_\spec^{G_2^c}$- and $I_\spec^H$-terms. Here, $H$ is the endoscopic group $\SL_2 \times \SL_2/\pm 1$ of $G_2$. 

Section \ref{Rtrans} then tells us which exact $I_\spec^{G_2^c}$- and $I_\spec^H$-terms appear by computing endoscopic transfers at infinity. The difficult part of this computation is pinning down various signs coming from transfer factors. We present a shortcut to make it more manageable. The final result should be thought of as an ``Eichler-Selberg'' trace formula for quaternionic automorphic representations on $G_2$. As a last piece of the puzzle, section \ref{Hterm} uses results about level-$1$ forms from \cite{CR15} to reduce counts of forms on $H$ to counts of classical modular forms. 

Section \ref{JL} uses all these formulas to characterize representations in $\mc Q_k(1)$ with $k>2$ in terms of automorphic representations on $G_2^c$ and certain pairs of classical modular forms. We substitute in values for the $I_\spec^{G_2^c}$-terms from \cite{CR15} and present a final table of dimensions: table \ref{tabledims}, in section \ref{finalcount}. Finally, building off an impressive undergraduate thesis \cite{Sul13} of Sullivan, we give a relatively simple closed-form formula for the $G_2^c$-term and present the resulting closed-form formula for $|\mc Q_k(1)|$ in section \ref{explicit}.

\subsection{Acknowledgements}

This work was done under the support of NSF RTG grant DMS-1646385. I would like to thank Kimball Martin and Aaron Pollack for discussions at conferences that eventually led to the consideration of this problem. The key idea that something like Theorem \ref{qspec} could hold for $G_2$, thereby making this problem solvable grew out of email conversations with Sam Mundy. I thank Jeffrey Adams and David Vogan for the full argument of \ref{qspec}. Olivier Ta\"ibi provided a great deal of help in pointing out many, many tricks and previous results I could use to keep computations reasonably simple and actually feasible. In particular, I am grateful for the suggestion to use ``method 2'' described in section \ref{geomfinal} that he passed on from Ga\"etan Chenevier. I also thank Gordan Savin for pointing out the thesis of Sullivan which saved a lot of work in producing the closed-form formula. Finally, Alexander Bertoloni-Meli provided much help in teaching me things about transfer factors. The root system picture for $G_2$ was heavily modified from an answer by user Heiko Oberdiek on TeX Stack Exchange.

\subsection{Notation}
Here is a list of notation used throughout:

\noindent{The group $G_2$}
\begin{itemize}
\item $G_2$ is the standard exceptional Chevalley group defined over $\Z$. 
\item $G_2^c$ is the unique inner form of $G_2$ over $\Q$. Recall that $(G_2^c)_\R$ is the compact real form. 
\item $\alpha_i, \eps_i, \lb_i$ are particular roots and coroots of $G_2$ defined in section \ref{rootsG2}. 
\item $s_\lb$ is the simple reflection associated to root or coroot $\lb$. 
\item $\rho$ is half the sum of the positive roots of $G_2$. 
\item $V_\lb$ is the finite dimensional representation of $G_2$ of highest weight $\lb$. 
\item $K$ is a choice of maximal compact subgroup $\SU(2) \times \SU(2)/\pm 1$ of $G_2(\R)$. 
\item $K^\infty$ is the product of maximal compact subgroups $G_2(\Z_p)$ over all $p$. 
\item $\Om = \Om_\C$ is the Weyl group of $G_2$.
\item $\Om_\R$ is the Weyl group of $K$ as a subset of $\Om$. 
\item $H$ will often refer to the specific endoscopic group $\SL_2 \times \SL_2/\pm 1$ of $G_2$. 
\item $\mc Q_k(1)$ is the set of discrete, quaternionic automorphic representations of $G_2$ of weight $k$ and level $1$ (see section \ref{qdsdefg2}). 
\item $\pi_k$ is the weight $k$ quaternionic automorphic representations of $G_2$ (see section \ref{qdsdefg2}). 
\end{itemize}
\noindent{General groups}
\begin{itemize}
\item
$G_\infty = \Res^F_\Q G(\R)$ for $G$ a reductive group over number field $F$. Since most groups here are over $\Q$, $G_\infty$ is usually $G(\R)$. 
\item
$G^S, G_S$ are more generally the standard upper- and lower-index notation for $G(\A^S), G(\A_S)$---leaving out the places in $S$ or only including the places in $S$ respectively.
\item
$\Om(G)$ is the absolute Weyl group of $G$. 
\item
$\Om_\R(G)$ is the subset of the Weyl group of $G_\infty$ with respect to an elliptic maximal torus (if one exists) generated by elements of $G_\infty$. 
\item
$K_G$ is a maximal compact subgroup of $G_\infty$. 
\item
$K^\infty_G$ for unramified $G$ is the product of chosen hyperspecial subgroups at all finite places. 
\item
$\rho_G$ is half the sum of the positive roots of $G$. 
\item
$[G(F)], [G(F)]_\ssm, [G(F)]_\mathrm{st}, [G(F)]_\el$ are the (semisimple, stable, elliptic) conjugacy classes of $G(F)$. 
\end{itemize}
\noindent{Real test functions}
\begin{itemize}
\item
$\varphi_\pi$ for $\pi$ a discrete series representation of $G_\infty$ is the pseudocoefficient defined in the corollary to proposition 4 in \cite{CD90}. 
\item
$\Pi_\disc(\lb)$ is the discrete series $L$-packet corresponding to dominant weight $\lb$. 
\item
$\eta_\lb$ is the Euler-Poincar\'e function from \cite{CD90} for $\Pi_\disc(\lb)$. We normalize it to be the average of the pseudocoefficients for $\pi \in \Pi_\disc(\lb)$ instead of their sum. 
\end{itemize}
\noindent{Trace Formula}
\begin{itemize}
\item
$\mc{AR}(G)$, $\mc{AR}_\disc(G)$ is the set of (discrete) automorphic representations on $G$. 
\item
$\mc{AR}_\ur(G)$ for $G$ unramified is the space of unramified automorphic representations of $G$.
\item
$m_\disc(\pi), m_\cusp(\pi)$ are  the multiplicities of automorphic representation $\pi$ in the discrete (cuspidal) subspace.
\item
$I^G_\spec, I^G_\geom, I^G_\disc$ are the distributions from Arthur's invariant trace formula on $G$.
\item
$S^G = S^G_\geom$ is the stable distribution defined in theorem \ref{stableL2}.
\end{itemize}
\noindent{Miscellaneous} 
\begin{itemize}
\item $\1_S$ is the indicator function of set $S$. 
\item $\1_G$ is the trivial representation on group $G$. 
\item  $\mc S_k(1)$ is the set of normalized, classical, cuspidal eigenforms on $\GL_2$ of level $1$ and weight $k$. 
\end{itemize}

\section{Quaternionic Discrete Series}\label{qdsdef}
\subsection{Discrete Series}
\subsubsection{Parametrization}
For this section, let $G$ be a reductive group over $\R$ and $K$ a maximal compact of $G(\R)$. Assume $G$ has elliptic torus $T$ so that $G(\R)$ has discrete series. Without loss of generality, $T \subseteq K$. Recall the notation from \cite[\S2.2.1]{Dal22} to discuss discrete series. In particular, recall the two parametrizations of discrete series on $G(\R)$:
\[
\pi^G_{\lb, \om} = \pi^G_{\om(\lb + \rho_{G})}
\]
for $\lb$ a dominant (but possibly irregular) weight of $T$ and $\om$ a Weyl-element that takes a chosen $\Om_G$ dominant chamber into a chosen $\Om_K$-dominant one. Note that $\pi^G_{\lb, \om}$ has infinitesimal character $\lb + \rho_{G}$. Recall that $\om(\lb + \rho_{G})$ is called the \emph{Harish-Chandra parameter} of this discrete series.

\subsubsection{Their pseudocoefficients}
Recall from the corollary on page 213 in \cite{CD90} the notion of pseudocoefficients $\varphi_\pi$ for discrete series $\pi$. They are defined by their traces against standard modules $\rho$:
\[
\tr_\rho(\varphi_\pi) = \begin{cases}
1 & \rho = \pi \\
0 & \rho \text{ standard, } \sigma \neq \pi
\end{cases}.
\]
Here, a standard module is a parabolic induction of a discrete series or limit of discrete series. 
\begin{note}
By the Langlands classification, every irreducible representation has a \emph{character formula} writing it as a linear combination of standard modules in the Grothendieck group. By linearity of trace, if $\sigma$ is an irreducible representation, then $\tr_\sigma(\varphi_\pi)$ is the coefficient of $\pi$ in its character formula.
\end{note}

Recall also the Euler-Poincar\'e functions $\eta_\lb$ that we normalize to be the average of pseudocoefficients over an $L$-packet of infinitesimal character $\lb + \rho_G$. For a quick summary of relevant properties of these functions in the notation used here, see \cite[\S2.2.2]{Dal22}.

\subsection{Trace Distinguishability}
A priori, the trace against $\varphi_\pi$ may be non-zero for certain non-tempered representations in addition to just $\pi$. This could make $\varphi_k$ unusable as a test function to pick out just automorphic representations $\pi$ with $\pi_\infty = \pi$. We analyze when this happens. 

\begin{dfn}
Call discrete series $\pi$ on group $G(\R)$ \emph{trace-distinguishable} if for all unitary representations $\sigma$ of $G(\R)$
\[
\tr_\sigma(\varphi_k) = \begin{cases} 1 & \sigma = \pi_k \\ 0 & \text{else} \end{cases}.
\]
\end{dfn}

To motivate this definition, the Paley-Weiner theorem of \cite{CD90} shows that $\varphi_\pi$ is the only compactly supported function that could have the property of isolating $\pi$ in the unitary dual in this way---there are none if $\pi$ isn't trace-distinguishable.

\begin{prop}\label{td}
Let discrete series $\pi$ on $G(\R)$ have Harish-Chandra parameter $\xi$. Define
\[
S_\xi =  \{\alpha \in \Phi_G : \langle \xi, \alpha^\vee \rangle = 1\}. 
\]
where $\Phi_G$ is the set of roots of $(G, T)$ for $T$ elliptic. Then $\pi$ is trace-distinguishable if and only if $\pi$ contains no non-compact roots. 
\end{prop}

\begin{proof}
The following proof was described to me by David Vogan. Choose simple roots so that $\xi$ is dominant. By the same argument of Vogan described in \cite[lem. 6.3.1]{Dal22}, $\tr_\sigma(\varphi_\pi) = 0$ unless $\tr_\sigma(\eta_{\xi - \rho_G}) \neq 0$ for $\eta_{\xi - \rho_G}$ the Euler-Poincar\'e function at infinitesimal character $\xi$. If $\sigma$ is unitary, this is only possible if $\sigma$ has non-zero $(\mf g,K)$-cohomology with respect to the irreducible finite-dimensional representation of infinitesimal character $\xi$. 

By the main classification result of \cite{VZ84}, the only representations that do so are the discrete-series packet $\Pi_\lb(\xi - \rho_G)$ and certain cohomological inductions $A_{\mf q}(\lb)$ for $\theta$-stable parabolic subalgebras $\mf q$ of $\mf g$ and $\lb$ a character of the Levi algebra $\mf l$ associated to $\mf q$ (see, for example, \cite[\S2.1]{AJ87} for a definition of $A_\mf q(\lb)$).  It therefore suffices to show that none of these except $\pi$ itself have $\pi$ appearing in their character formulas. 

The only non-trivial case to check is that of non-discrete-series $A_{\mf q}(\lb)$. Theorem 8.2 in \cite{AJ87} provides its character formula and shows that the discrete series that appear are exactly those with Harish-Chandra parameters of the form $\lb + \om \rho_\mf l$ where $\om$ ranges over the Weyl group of $\mf l$. For each $\om$, pick a set of simple roots of $\mf l$ so that $\om \rho_l$ is dominant. Then for simple root $\alpha$ of $\mf l$, 
\[
\langle \lb + \om \rho_\mf l, \alpha \rangle = \langle \om \rho_\mf l, \alpha \rangle = 1.
\]
In particular, if $\pi$ appears in the character formula for $A_\mf q(\lb)$, then there is a choice of simple roots of $\mf l$ that are in $S_\xi$.

Finally, since $\lb$ is regular, for any root  $\alpha$ of $G$, $|\langle \lb, \alpha^\vee \rangle| \geq 1$. Therefore, $S_\xi$ needs to be a subset our simple roots chosen to make $\lb$ $G$-dominant. Let $\mf l_\xi$ be the associated Levi subalgebra. If $\pi$ appears in the character formula for $A_\mf q(\lb)$, the above gives that $\mf l \subseteq \mf l_\xi$. Therefore, if $S_\xi$ has no non-compact roots, then $\mf l$ is compact, so our condition on $\mf l$ implies that $A_\mf q(\lb)$ is discrete series (see, for example, the bottom of \cite[pg. 272]{AJ87}) and therefore equal to $\pi$. In total, $\pi$ cannot appear in other character formulas completing one direction. 

In the other direction, if $S_\xi$ has a non-compact root, then this can be used to construct a rank-$1$ Levi subalgebra $\mf l$ that isn't compact. Pick corresponding $\mf q$ and choose chamber for $\mf l$ so that $\lb$ is $\mf l$-dominant. Then $\pi$ will appear in the character formula of $A_\mf q(\lb - \rho_\mf l)$ which isn't discrete series.
\end{proof}

\subsection{Quaternionic Discrete Series}
Quaternionic discrete series are a special class of discrete series picked out in \cite{GW96}. We recall some needed definitions and properties:
\begin{dfn}
Call $G(\R)$ \emph{quaternionic} if $K$ is isogenous to a group of the form $\SU_2(\R) \times L$ (that has the same rank as $G$). 
\end{dfn}

\begin{dfn}
If $G(\R)$ is quaternionic, call discrete series $\pi$ \emph{quaternionic} if its minimal $K$ type lifts to a representation of the form $V \boxtimes \1_L$ on $\SU_2(\R) \times L$. Let the \emph{weight} of $\pi$ be $(\dim V -1)/2$. 
\end{dfn}

By looking at extended root diagrams:
\begin{lem}
Group $G(\R)$ is quaternionic if and only if there is a choice of simple roots of $(G(\R),T)$ such that there is a is unique non-compact simple root that also the unique simple root not perpendicular to the highest root. 
\end{lem}
Then, by Blattner's formula for minimal $K$-types:
\begin{lem}\label{qparam}
Let $G$ have quaternionic discrete series with simple roots chosen as in the previous lemma. Then all quaternionic discrete series have Harish-Chandra parameter of the form $n \beta' + \rho_G$ for $n \in \Z_{\geq 0}$ and $\beta'$ the highest root.
\end{lem}

Miraculously, almost all quaternionic discrete series are trace distinguishable:
\begin{prop}\label{qpstrace}
Let $\pi$ be a quaternionic discrete series of $G(\R)$ with infinitesimal character not equal to $\rho_G$. Then $\pi$ is trace-distinguishable. 
\end{prop}

\begin{proof}
If $\lb = n \beta' + \rho_G$ as in lemma \ref{qparam}, then $S_\lb$ from proposition \ref{td} is a subset of the simple roots chosen in lemma \ref{qparam}. Since $\beta'$ is not perpendicular to the unique non-compact simple root and $n > 1$, $S_\lb$ can only contain compact roots. 
\end{proof}

\section{Trace Formula}\label{geomG2}
Let $G$ be a reductive group over number field $F$ such that $G_\infty$ is quaternionic. 
\begin{dfn}
A \emph{quaternionic automorphic representation} on $G$ is an automorphic representation $\pi$ such that $\pi_\infty$ is quaternionic. 
\end{dfn} 
In this section we construct an ``explicit'' trace formula for studying almost all quaternionic automorphic representations. 

\subsection{Spectral Side}
The previous discussion on quaternionic discrete series shows:
\begin{thm}\label{qspec}
Let $G_\infty$ have quaternionic discrete series and let $\pi_0$ be a quaternionic discrete series of $G_\infty$ with infinitesimal character not equal to $\rho_G$. Then
\begin{align*}
I_\spec(\varphi_{\pi_0} \otimes f^\infty) & = \sum_{\pi \in \mc{AR}_\disc(G_2)} m_\disc(\pi) \delta_{\pi_\infty = \pi_0} \tr_{\pi^\infty} (f^\infty) \\
&= \sum_{\pi \in \mc{AR}_\cusp(G_2)} m_\cusp(\pi) \delta_{\pi_\infty = \pi_0} \tr_{\pi^\infty} (f^\infty). 
\end{align*}
\end{thm}

\begin{proof}
The statement for discrete representations is the same argument as \cite[prop. 6.3.3]{Dal22} after we know proposition \ref{qpstrace} that these quaternionic discrete series are trace-distinguishable. Since $\pi_\infty = \pi_0$ is necessarily discrete series for the non-zero terms, the main result of \cite{Wal84} shows that  $m_\cusp(\pi) = m_\disc(\pi)$. 
\end{proof}

\begin{note}
Of course, this theorem holds more generally for $\pi_0$ an arbitrary trace-distinguishable discrete series. 
\end{note}

\subsection{Geometric Side/The Hyperendoscopy Formula}
\subsubsection{Notation}
We will need to recall some extra notation related to general reductive group $H$ over $F$ to understand the geometric side
\begin{itemize}
\item $\Om^c_H$ is the Weyl group generated by compact roots at infinity.
\item $d(H_\infty)$ is the size of the discrete series $L$-packets of $H_\infty$. Alternatively, $d(H_\infty) = |\Om_H|/|\Om^c_H|$. 
\item $k(H_\infty)$ is the size of the group $\mf K = \ker(H^1(\R, T_\el) \to H^1(\R, G_\infty))$ that appears in the theory of endoscopy for $G_\infty$.
\item $q(H_\infty) = \dim(H_\infty/K_\infty Z_{H_\infty})$ where $K_\infty$ is a maximal compact subgroup of $H_\infty$. 
\item $H_\infty^*$ is the quasisplit inner form of $H_\infty$.
\item $\bar H_\infty$ is the compact form. If $H_\infty$ has an elliptic maximal torus, this is inner.
\item $e(H_\infty)$ is the Kottwitz sign $(-1)^{q(H^*_\infty) - q(H_\infty)}$. 
\item $[H:M] = [H:M]_F = \dim(A_M/A_G)$, where $A_\star$ is the maximal $F$-split torus in the center of $\star$. We call this the index of $M$ in $H$.  
\item $\tau(H)$ is the Tamagawa number of $H$.
\item $\Mot_H$ is the Gross motive for $H$.
\item $L(\Mot_H)$ is the value of the corresponding $L$-function at $0$ (or residue of the pole). 
\item $\iota^H(\gamma)  = \iota_F^H(\gamma)$ for $\gamma \in H(F)$ is the number of connected components of $H_\gamma$ that have an $F$-point.
\end{itemize}

\subsubsection{Preliminaries}
Let $\pi_0$ be a quaternionic discrete series on $G_\infty$. We will use the hyperendoscopy formula of \cite{Fer07} to compute $I_\geom(\varphi_{\pi_0} \otimes f^\infty)$.  We need to apply the general case of \cite[Thm. 4.2.3]{Dal22} since $G$ might have endoscopy without simply connected derived subgroup. We use notation from \cite[\S3,4]{Dal22} to discuss endoscopy and hyperendoscopy. See \cite{KS99} for a full reference to the theory of endoscopy and \cite{Lab11} for a course-notes-style introduction.

Let $\eta$ be the Euler-Poincar\'e function for the $L$-pacekt $\Pi_\disc(\lb)$ that conatins $\pi_0$. Let $\mc{HE}_\el(G)$ be the set of non-trivial hyperendoscopic paths for $G$. Then, in the notation of \cite[\S4]{Dal22},
\[
I^{G_2}_\geom(\varphi_{\pi_0} \otimes f^\infty) = I^{G_2}_\geom(\eta_k \otimes f^\infty) + \sum_{H \in \mc{HE}_\el(G_2)} \iota(G, \mc H) I^{\td{\mc H}}_\geom(((\eta  - \varphi_{\pi_0}) \otimes f^\infty)^{\td{\mc H}}),
\]
where the $\td{\mc H}$ are choices of $z$-pair paths when they are needed.

\subsubsection{Telescoping}
Next, an unpublished result of Kottwitz summarized in \cite[\S5.4]{Mor10} and proved by other methods in \cite{Pen19} stabilizes $I_\geom(\varphi \otimes f^\infty)$ when $\varphi$ satisfies a technical property of being stable-cuspidal (as EP-functions are but pseudocoefficients are not):

\begin{thm}\label{stableL2}
Let $\varphi$ be stable cuspidal (eg. an EP-function, but not a pseudocoefficient) on $G_\infty$ and $f^\infty$ a test function on $G(\A^\infty)$. Then
\[
I^{G}_\geom(\varphi \otimes f^\infty) = \sum_{H \in \mc E_\el(G)} \iota(G,H) S^{\td H}_\geom((\varphi \otimes f^\infty)^{\td H}),
\]
where $\mc E_\el(G)$ is the set of elliptic endoscopic groups for $G$ and the $\td H$ are $z$-extensions if necessary. The transfers $(\varphi \otimes f^\infty)^{\td H}$ depend on choices of measures for $G$ and $H$. 

The $S_\geom$ terms are defined by their values on Euler-Poincar\'e functions:
\begin{multline*}
S^H_\geom(\eta_\lb \otimes f^\infty)  = \sum_{M \in \mc L^\cusp(H)} (-1)^{[H : M]} \f{|\Om_{M,F}|}{|\Om_{H,F}|}  \tau(M) \\
 \times \sum_{\gamma \in [M(\Q)]_{\st, \el^\infty}} |\iota^M(\gamma)|^{-1} \f{e(\bar M_{\gamma, \infty})}{ \vol(\bar M_{\gamma, \infty}/A_{\bar M_\gamma, \infty})} \f{k(M_\infty)}{k(H_\infty)} \Phi^H_M(\gamma, \lb) SO_\gamma^\infty((f^\infty)_M),
\end{multline*}
choosing Tamagawa globally measure on all centralizers. The volume on $\bar M_{\gamma, \infty}$ is transferred from that on $M_{\gamma, \infty}$ in the standard way for inner forms so that the entire term doesn't depend on a choice of measure at infinity. 
\end{thm}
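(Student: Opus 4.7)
The plan is to follow Kottwitz's approach as summarized in \cite[\S5.4]{Mor10} (independently reproved by \cite{Pen19}), splitting the argument into the stabilization identity and the explicit formula for $S^{\td H}_\geom$ on Euler-Poincar\'e functions. For the stabilization, I would start from Arthur's invariant trace formula expansion of $I^{G_2}_\geom(\varphi \otimes f^\infty)$ as a sum of weighted orbital integrals indexed by Levi subgroups and conjugacy classes. The defining feature of a stable-cuspidal $\varphi$ at infinity is that its invariant distributions force the weighted-orbital contributions of proper Levis to collapse and its stable orbital integrals vanish off the elliptic regular set of $G_2(\R)$. This reduces the geometric side to an elliptic sum over semisimple conjugacy classes of $G_2(\Q)$ that are elliptic at infinity, to which one applies Kottwitz's stabilization of the elliptic part: rational classes sharing a stable class are repackaged using dual-group cohomology, producing the sum $\sum_H \iota(G, H) S^{\td H}_\geom$ with the appropriate transfer $(\varphi \otimes f^\infty)^{\td H}$.

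For the explicit formula, I would substitute $\varphi = \eta_\lb$ into the stable elliptic distribution and unfold. The outer sum over cuspidal Levis $M$ of $H$ with sign $(-1)^{[H:M]}$ and Weyl-group ratio $|\Om_{M,F}|/|\Om_{H,F}|$ arises from Arthur's parametrization of stable elliptic semisimple classes through their Jordan decomposition on each Levi. Inside each Levi, stable classes $\gamma \in [M(\Q)]_{\st, \el^\infty}$ contribute with Tamagawa weight $\tau(M)$ and the factor $\iota^M(\gamma)^{-1}$ tracking the fiber of rational over stable classes. The local input at infinity is Kottwitz's calculation of the stable orbital integral of $\eta_\lb$ at an elliptic element of $M_\infty$: this produces the function $\Phi^H_M(\gamma, \lb)$ itself, the Kottwitz sign $e(\bar M_{\gamma, \infty})$ arising from transferring to the compact inner form, the ratio $k(M_\infty)/k(H_\infty)$ reflecting the endoscopic cohomology at infinity, and the volume $\vol(\bar M_{\gamma, \infty}/A_{\bar M_\gamma, \infty})$ normalizing the transferred Tamagawa measure. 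The finite-place contribution is the stable orbital integral $SO^\infty_\gamma((f^\infty)_M)$ of the Levi constant term.

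The hard part is the bookkeeping: matching Kottwitz signs and transfer factors through the stabilization, correctly normalizing the inner-form transfer of measure at infinity so that the product of $e(\bar M_{\gamma, \infty})$ and the inverse compact volume is independent of measure choices, and treating $z$-extensions for endoscopic groups whose derived subgroups fail to be simply connected. For the subsequent application to $G_2$ only $H = \SL_2 \times \SL_2 / \pm 1$ contributes nontrivially, which mitigates the $z$-extension issue but does not remove the need to verify that the test functions to which the theorem is applied (in particular $\eta_k$ and the difference $\varphi_k - \eta_k$ appearing in the hyperendoscopy formula) are genuinely stable-cuspidal; for $\eta_\lb$ this is immediate from its definition as an $L$-packet average of pseudocoefficients, while the stability of the combinations involving $\varphi_k$ rests on proposition \ref{pstraceG2}.
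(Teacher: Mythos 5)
First, a contextual point: the paper does not prove this statement at all --- it is quoted as an unpublished result of Kottwitz, summarized in \cite[\S 5.4]{Mor10} and reproved by \cite{Pen19}, so the ``proof'' at the level of this paper is a citation. Your attempt to reconstruct the argument is therefore doing more than the paper does, but the reconstruction misstates the key mechanism. It is not true that stable-cuspidality of $\varphi$ collapses the proper-Levi contributions and reduces $I^{G_2}_\geom$ to an elliptic sum over semisimple classes of $G_2(\Q)$; if that were the case, the stable distributions $S^H_\geom$ in the statement would have no sum over $M \in \mc L^\cusp(H)$. What actually happens (Arthur's $L^2$-Lefschetz number formula, cf.\ \cite{Art89}) is that for a stable cuspidal archimedean component the weighted orbital integrals at infinity do not vanish but become computable: the distributions $I_M(\gamma,\cdot)$ evaluate on $\varphi$ in terms of the character-type functions $\Phi_M(\gamma,\lb)$, leaving a sum over \emph{all} cuspidal Levi subgroups and over classes elliptic at infinity in each Levi. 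The stabilization then has to be carried out Levi-by-Levi, matching these $\Phi_M$-terms across endoscopic groups together with the signs $e(\bar M_{\gamma,\infty})$ and the factors $k(M_\infty)/k(H_\infty)$; this is precisely the content of Kottwitz's unpublished manuscript and of \cite{Pen19}, and your sketch defers exactly this bookkeeping while attributing the Levi sum to a ``Jordan decomposition'' parametrization, which is not where it comes from.

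Second, your closing remark is incorrect: the stable-cuspidality of the test functions to which the theorem is applied does not rest on proposition \ref{pstraceG2}. That proposition is a purely spectral statement (vanishing of traces of $\varphi_k$ against non-tempered unitary representations) used to identify $I_\spec$ with a count of quaternionic representations; it plays no role on the geometric side. The functions fed into theorem \ref{stableL2} are $\eta_k$ on $G_2$, which is stable cuspidal by construction as an $L$-packet average of pseudocoefficients, and the transfers $((\eta_k - \varphi_k)\otimes f^\infty)^{\td{\mc H}}$ on the hyperendoscopic groups, whose archimedean components are automatically stable cuspidal because endoscopic transfer of cuspidal functions lands in linear combinations of Euler--Poincar\'e functions by Labesse's formulas \cite[\S IV.3]{Lab11}. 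A single pseudocoefficient $\varphi_k$ on $G_2$ itself is cuspidal but \emph{not} stable cuspidal, which is exactly why the hyperendoscopy formula is invoked before the stabilization rather than applying theorem \ref{stableL2} to $\varphi_k \otimes f^\infty$ directly.
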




There's an alternating sign in the hyperendoscopy formula: if $\mc H$ is a hyperendoscopic path, then $-\iota(G, \mc H) \iota(\mc H, H) = \iota(G, (\mc H, H))$ for $H$ any endoscopic group of $\mc H$. Here, $(\mc H, H)$ represents the concatenation and $\mc H$ is overloaded to also refer to the last group in $\mc H$. 

In particular, substituting in the stabilization telescopes the hyperendoscopy formula.

\subsection{Final Formula and Usage Notes}\label{geomfinal}
\subsubsection{Formula}
Telescoping and adding in theorem \ref{qspec}, then we get final formula for quaternionic discrete series $\pi_0$ of $G_\infty$ that has infinitesimal character not equal to $\rho_G$:
\begin{multline}\label{mainformula}
\sum_{\pi \in \mc{AR}_\disc(G)} m_\disc(\pi) \delta_{\pi_\infty = \pi_0} \tr_{\pi^\infty} (f^\infty) \\ = S^{G}_\geom(\eta \otimes f^\infty) + \sum_{\substack{H \in \mc E_\el(G) \\ H \neq G}} \iota(G, H) S^{\td H}_\geom((\varphi_{\pi_0} \otimes f^\infty)^{\td H}).
\end{multline} \
The right side can be evaluated with theorem \ref{stableL2}.

We recall:
\[
\iota(G,H) = |\Lambda(H, \mc H, s, \eta)|^{-1} \f{\tau(G)}{\tau(H)},
\]
where $\Lambda(H, \mc H, s, \eta)$ is the image in $\Out(\wh H)$ of the automorphisms of the endoscopic quadruple. 

While getting a formula in terms of the distributions $S_\geom$ on smaller endoscopic groups comes immediately from stabilization, the above telescoping argument seems to be necessary to get explicit formulas for the $S_\geom$ when using a test factor at infinity that is just cuspidal instead of a stable cuspidal.

\subsubsection{Usage}
There are two possible methods to compute terms here. If we were interested in working with more general groups or at more general level, something like method 1 would have been necessary. However, our application case of level-$1$ representations on $G_2$ allows us to use the much easier method 2. Method 2 in fact does not even need an explicit expansion for $S_\geom$.

\noindent\underline{Method 1:}

We can try calculate the $S_\geom$ terms directly from their formula in theorem \ref{stableL2}. We will need to choose Euler-Poincar\'e measure at $\bar M_\gamma$ times canonical measure for the orbital integrals (canonical measure is the same for all inner forms). This adds an extra factor of 
\[
d(\bar M_{\gamma, \infty}) \f{L(\Mot_{M_{\gamma}})}{e(\bar M_{\gamma, \infty}) 2^{\rank(M_{\gamma, \infty})}}
\]
by \cite[lem. 6.2]{ST16}. Since $d(H_\infty) = 1$ and $\vol_{EP}(H_\infty /A_{H_\infty}) = 1$ for $H$ compact, this expands the terms in \eqref{mainformula} as:
\begin{multline*}
S^H_\geom(\eta_\lb \otimes f^\infty)  = \sum_{M \in \mc L^\cusp(H)} \lf( (-1)^{[H : M]} \f{|\Om_{M,F}|}{|\Om_{H,F}|} \ri) \lf( \tau(M) \f{k(M_\infty)}{k(H_\infty)} \ri)  \\
 \times \sum_{\gamma \in [M(\Q)]_{\st, \el^\infty}}  2^{-\rank(M_{\gamma, \infty})}   \Phi^H_M(\gamma, \lb) 
 \lf( L(\Mot_{M_{\gamma}}) |\iota^M(\gamma)|^{-1}  SO_\gamma^\infty((f^\infty)_M) \ri),
\end{multline*}
where the stable orbital integrals are now computed using canonical measure on centralizers.  

The hardest terms here are the stable orbital integrals, the $L$-values, and the characters $\Phi$. The constant terms $(f^\infty)_M$ are explicit integrals.

The $L$-values may be computed as products of values of Artin $L$-functions by explicitly describing the motives from \cite{Gro97}. The terms $\Phi$ can be reduced to linear combinations of traces of $\gamma$ against finite dimensional representations of $G$ by the algorithm on \cite[pg. 273]{Art89}. These can of be computed by the Weyl character formula and its extension to irregular elements stated in, for example, \cite[prop. 2.3]{CR15}. 

The stable orbital integrals unfortunately cause far more difficulty. For specific groups, including our eventual application case of $G_2$, they are computed and listed in tables in \cite[pg. 159]{GP05}. First, they are interpreted as orbital integrals on compact-at-infinity form $G^c$ by endoscopic transfer. The spectral side of the trace formula on $G^c$ is then possible to compute, allowing the orbital integrals to be solved for once the coefficients in terms of $L$-values are known. Alternatively, \cite{CT20} uses another trick, inputting vanishing results for small weight automorphic representations to solve for unstable orbital integrals in the resulting system of linear equations.
 
They can also be computed directly from unstable orbital integrals: \cite{Kot80} and \cite{Tai17} use Bruhat-Tits theory to do this for $\GL_3$ and some classical groups respectively. Either way, all currently known methods are not fully general, and extremely complicated. 

\noindent\underline{Method 2:}

Fortunately, there is a much simpler way to compute for our desired application of level-$1$ representations on $G_2$. Recalling that $I^{G_2^c}$ is known from \cite{CR15}, we can compare the expansions \eqref{mainformula} for $G_2$ and $G_2^c$. The term for $S^{G_2}$ a
appears in the expansion for $I^{G_2^c}$ and can therefore be solved for and substituted in the expansion for $I^{G_2}$. In total we get a formula
\[
I^{G_2} = I^{G_2^c} + \text{corrections},
\]
where the corrections are in terms of $S^H$ for smaller endoscopic $H$. 

In the next section we will see that there aren't actually that many $H$ appearing. Finally, section \ref{Hterm} will show that the terms for these $H$ are easily computed through another trick in the case of level-$1$. Method 2 also gives in section \ref{JL} a Jacquet-Langlands-style result comparing quaternionic representations on $G_2$ to representations on $G_2^c$.  

\begin{note}
We comment on possible extensions of method 2. The comparison to a compact form would work for any group with a form that is compact at infinity and unramified at all finite places. These appear in types, $G_2, B_3, D_4, B_4, F_4, B_7, D_8, B_8$, and  $E_8$ as enumerated in \cite{Gro96b}. 

Being able to easily count the endoscopic terms spectrally is more rare and requires some kind of recursive expansion down to only terms of Lie type $A_1^n$. This in particular works out for type $D_4$, so level-$1$ forms on type $D_4$ should be countable analogously to method 2. 

In another direction, plugging in other unramified test functions could compute counts weighted by Hecke eigenvalues. These would be in terms of the same weighted counts on $G_2^c$ and certain other weighted counts of classical modular forms that are determined by combinatorial formulas for unramified transfers as explained in \cite[\S5.4]{Dal22}. 
\end{note}

\section{$G_2$ Computation Set-up}\label{g2setup}
From now on, we specialize to $G = G_2$ and discuss how to apply the previous theory to count $|\mc Q_k(1)|$. 
\subsection{Root System of $G_2$}\label{rootsG2}
\subsubsection{Roots}
We use notation from \cite{LS93} to specify the root system of $G_2$. Let $K$ be the maximal compact $\SU(2) \times \SU(2)/\pm 1$ of $G_2(\R)$ and choose a maximal torus $T(\R)$ that is inside $K$. Make a choice of simple roots of $G_2(\R)$ that are non-compact, in this case determining a unique dominant chamber with respect to both $G_2$ and $K$. Let $\beta$ be the highest root of $G_2$ with respect to the choice of simple roots and note that it is long.

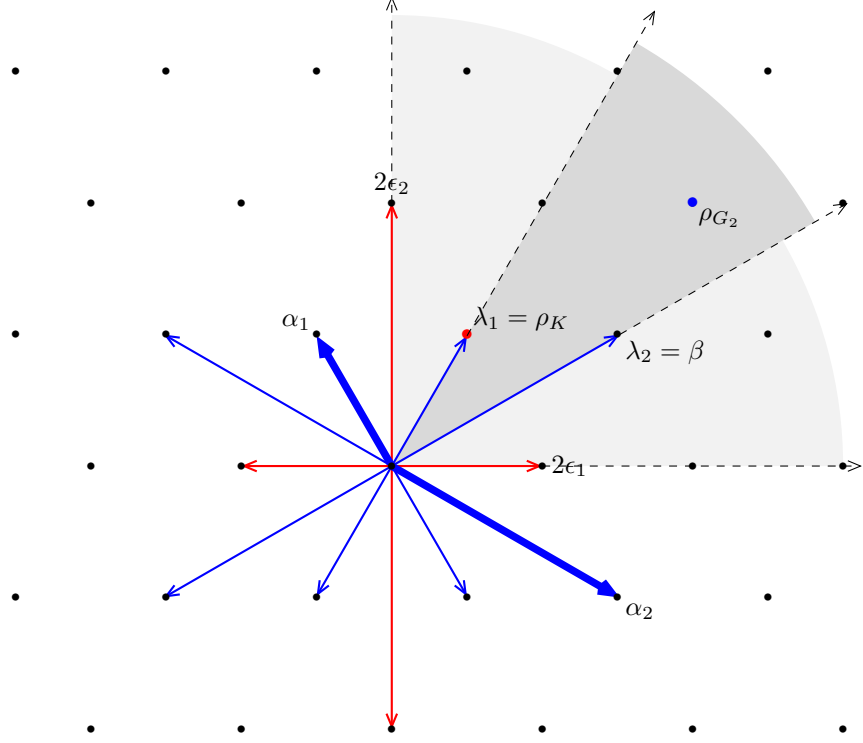
\begin{figure}
\centering
\caption{Character lattice, roots, and choices of dominant chamber for $G_2$}
  \begin{tikzpicture}[
    -{Straight Barb[bend,
       width=\the\dimexpr10\pgflinewidth\relax,
       length=\the\dimexpr12\pgflinewidth\relax]},
  ]
  
  \fill[color = gray!10] (0,0) --  (90:6) arc(90:60:6) -- cycle;
\fill[color = gray!10] (0,0) --  (30:6) arc(30:0:6) -- cycle;
\fill[gray!30] (0,0) --  (60:6.5) arc(60:30:6.5) -- cycle; 

      \draw[thick, red] (0, 0) -- (0*60:2);
      \draw[thick, blue] (0, 0) -- (30 + 0*60:3.5);
       \draw[thick, blue] (0, 0) -- (1*60:2);
      \draw[thick, red] (0, 0) -- (30 + 1*60:3.5);
       \draw[line width = 1mm, blue] (0, 0) -- (2*60:2);
      \draw[thick, blue] (0, 0) -- (30 + 2*60:3.5);
       \draw[thick, red] (0, 0) -- (3*60:2);
      \draw[thick, blue] (0, 0) -- (30 + 3*60:3.5);
       \draw[thick, blue] (0, 0) -- (4*60:2);
      \draw[thick, red] (0, 0) -- (30 + 4*60:3.5);
       \draw[thick, blue] (0, 0) -- (5*60:2);
      \draw[line width = 1mm, blue] (0, 0) -- (30 + 5*60:3.5);
    \node[right] at (0:2) {$2 \eps_1$};
     \node[above] at (90:3.5) {$2 \eps_2$};
    \node[below right, inner sep=.2em] at (30:3.5) {$\lb_2 = \beta$};
    \node[above left, inner sep=.2em] at (120:2) {$\alpha_1$};
    \node[above right, inner sep=.2em] at (60:2) {$\lb_1 = \rho_K$};
    \node[below right, inner sep=.2em] at (330:3.5) {$\alpha_2$};
    
    \foreach \i in {-4,-2, ...,6}{
    \foreach \j in {-2, 0, ..., 2}{
    \node at (\i, 1.75*\j) {\tiny \textbullet};
    }
    } 
    
    \foreach \i in {-5,-3, ..., 5}{
    \foreach \j in {-1, 1, ..., 3}{
    \node at (\i, 1.75*\j) {\tiny \textbullet};
    }
    } 
    
    \node[blue] at (4, 3.5) {\small \textbullet};
    \node[red] at (1, 1.75) {\small \textbullet};
    
    \node[below right, inner sep=.2em] at (4, 3.5) {$\rho_{G_2}$};

    
    \draw [dashed] (90:3.5) -- (90:6.25);
    \draw [dashed] (60:2) -- (60:7);
    \draw [dashed] (30:3.5) -- (30:7);
    \draw [dashed] (0:2) -- (0:6.25);
  \end{tikzpicture}
  \label{G2rootpicture}
  \end{figure}

We now give explicit coordinates. As a mnemonic convention, roots indexed $1$ will be short and roots indexed $2$ will be long. Figure \ref{G2rootpicture} displays all the roots and shades our choices of dominant Weyl chambers for both $G_2$ and $K$. The compact roots at infinity are the four along the $\eps_i$-coordinate axes.

If the roots of the short and long $\SU_2$ are $2\eps_1$ and $2\eps_2$ respectively, then the simple roots of $G_2$ are: 
\[
\text{(short) }\alpha_1 = - \eps_1 + \eps_2,  \qquad \text{(long) } \alpha_2 = 3 \eps_1 - \eps_2.
\]
The other positive roots are:
\begin{gather*}
\text{(short) } 2 \eps_1 = \alpha_1 + \alpha_2, \qquad \eps_1 + \eps_2 = 2 \alpha_1 + \alpha_2, \\
\text{(long) } 2 \eps_2 = 3 \alpha_1 + \alpha_2,  \qquad 3 \eps_1 + \eps_2 = 3 \alpha_2 + 2 \alpha_2.
\end{gather*}
The fundamental weights are:
\[
\lb_1 = 2 \alpha_1 + \alpha_2, \qquad \lb_2 = 3 \alpha_1 + 2 \alpha_2.
\] 
Of course $\beta = \lb_2$. 

The Weyl group is generated by simple reflections:
\[
s_{\alpha_1} \begin{pmatrix} 2\eps_1 \\ 2\eps_2 \end{pmatrix} = \begin{pmatrix} \eps_1 + \eps_2  \\ 3\eps_1 - \eps_2 \end{pmatrix}, \qquad s_{\alpha_2} \begin{pmatrix} 2\eps_1 \\ 2\eps_2 \end{pmatrix} = \begin{pmatrix} -\eps_1 + \eps_2  \\ 3\eps_1 + \eps_2 \end{pmatrix}.
\]
Finally:
\begin{gather*}
\rho_K = \eps_1 + \eps_2 = 2 \alpha_1 + \alpha_2, \\
\rho_G = 4 \eps_1 + 2 \eps_2 = 5 \alpha_1 + 3 \alpha_2.
\end{gather*}

\subsubsection{Coroots}
Coroots will follow the opposite mnemonic: coroots indexed $1$ are long and coroots indexed $2$ are short.

Since $G_2$ has trivial center, $X^*(T)$ is the root lattice, which is exactly
\[
X^*(T) = \{a \eps_1 + b \eps_2 : a,b \in \Z, a + b \in 2\Z\}.
\]
Let $(\delta_1, \delta_2)$ be the dual basis to $(2\eps_1, 2\eps_2)$: i.e. $(\delta_i, \eps_j) = 1/2 \1_{i=j}$. Then:
\[
X_*(T) = \{a \delta_1 + b \delta_2 : a,b \in \Z, a + b \in 2\Z\}.
\]
Since $\eps_1$ and $\eps_2$ are perpendicular:
\begin{gather*}
(2 \eps_1)^\vee = 2\delta_1, \\
(2 \eps_2)^\vee = 2\delta_2.
\end{gather*}
More generally, the Weyl action gives:
\begin{gather*}
(\alpha_1^\vee, 2 \eps_1) = -1, \qquad (\alpha_1^\vee, 2\eps_2) = 3,\\
(\alpha_2^\vee, 2 \eps_1) = 1, \qquad (\alpha_2^\vee, 2\eps_2) = - 1,
\end{gather*}
so we get simple coroots:
\begin{gather*}
\alpha_1^\vee = -\delta_1 + 3\delta_2, \\
\alpha_2^\vee = \delta_1 - \delta_2.
\end{gather*}
This reproduces that the coroot lattice is $X_*(T)$, implying that $G_2$ is simply connected. For completeness:
\begin{gather*}
\lb_1^\vee = \delta_1 + 3 \delta_2,\\
\lb_2^\vee =  \delta_1 + \delta_2.
\end{gather*}

\subsection{Quaternionic Discrete Series for $G_2$}\label{qdsdefg2}
The quaternionic discrete series on $G_2$ of weight $k$ for $k \geq 2$ lies in the $L$-packet 
\[
\Pi^{G_2}_\disc((k-2) \beta).
\]
The members of this $L$-packet have Harish-Chandra parameters:
\[
(k-2)\beta + \rho_G, \qquad s_{\alpha_1}((k-2)\beta + \rho_G), \qquad s_{\alpha_2}((k-2)\beta + \rho_G).
\]
As in \cite{GGS02}, the quaternionic member is the one with minimal $K$-type $\lb_B = 2k\eps_2$. We know that the discrete series $\pi(\om, \lb)$ has minimal $K$-type
\[
\lb_B = \om(\lb + 2 \rho_G) - 2 \rho_K
\]
by the Blattner formula \cite[Thm. 9.20]{Kna01}. Therefore the weight-$k$ quaternionic discrete series $\pi_k$ is specifically $\pi(s_{\alpha_2}, (k-2)\beta)$---computing, $s_{\alpha_2}$ fixes $\rho_K$ so 
\[
s_{\alpha_2}(\lb + 2 \rho_G) - 2 \rho_K = s_{\alpha_2}(\lb + 2 \rho_G - 2 \rho_K) = s_{\alpha_2}(\lb + 2 \beta)  = s_{\alpha_2}(k \beta) = 2k \eps_2.
\]
This is the discrete series with Harish-Chandra parameter 
\[
\lb_{k,H}: = s_{\alpha_2}((k-2)\beta + \rho_G).
\]
Call it $\pi_k$ and its pseudocoefficient $\varphi_k$. 

Theorem \ref{qspec} then gives that for $k > 2$, 
\begin{equation}\label{ssideG2}
|\mc Q_k(1)| = I_\spec(\varphi_k \otimes \1_K)
\end{equation}
if we choose measures so that $\vol G_2(\wh \Z) = 1$. Note again that this heavily depends on the miracle of proposition \ref{qpstrace} and a similar result does \emph{not} hold either for pseudocofficients for the other members of $\Pi_\disc((k-2)\beta)$ or for the Euler-Poincar\'e function. 

\begin{note}
Theorem \ref{qspec} for just the case of $G_2$ can be produced much more easily by the computation in \cite{Mun20} of the $A$-packets of infinitesimal character $(k-2)\beta + \rho_G$ for $k>2$. Mundy found that $\pi_k$ appears in all of them. Therefore, trace-distinguishability follows immediately from \cite[Lemma 8.8]{AJ87} that a given discrete series appears in the character formula of exactly one element of such an $A$-packet. 
\end{note}

\section{Groups Contributing and Related Constants}\label{gc}
\subsection{Elliptic Endoscopy of $G_2$}\label{endoscopy}
The elliptic endoscopic groups of $G_2$ are $G_2$, $\PGL_3$ and $\SO_4$. This is stated in a thesis \cite{Alt13} but not fully explained, so we fill in some details here for reader convenience. We use notational conventions for endoscopy as in \cite[\S3]{Dal22}. 
We compute the possible endoscopic pairs $(s, \rho)$. 

Since $G_2$ has trivial center, the cohomology condition on $s$ is always satisfied so we don't bother checking it. Trivial center further gives that the isomorphism class of the pair only depends on $s$ through its centralizer. Next, we note a result that we thank a referee for pointing out:
\begin{lem}
Let $G$ be a split, simple, and simply connected group over number field $F$. Then all its endoscopic groups are split.
\end{lem}

\begin{proof}
Since $G$ is simply connected, there is an $L$-embedding $\Ld H \into \Ld G$. Since $G$ is split, there is a projection $\Ld G \onto \wh G$. By inspecting the reconstruction of $\Ld H$ from $(s, \rho)$, the image of $\Ld H$ in $\wh G$ is connected if and only if $\rho$ is trivial. 

However, since $\wh G$ also has trivial center, $s$ is fixed by $\rho$ which implies that the image of $\Ld H$ in $\wh G$ is the centralizer of $s$. It is therefore necessarily connected since $s$ is semisimple and $\wh G$ is simply connected.
\end{proof}

In particular $\rho$ is always trivial and we can always find a valid $s$ for any possible centralizer $\wh H$. The possible elliptic $\wh H$ with trivial $\rho$ are $G_2, \SL_3$ and $\SL_2 \times \SL_2/\{\pm 1\}$ corresponding to split endoscopic groups $G_2, \PGL_3$, and $\SL_2 \times \SL_2/\{\pm 1\}$. In each of these cases, $\Lambda = 1$.

If a group contributes to the stabilization applied to our test function, then by the fundamental lemma, it needs to be unramified away from infinity. By formulas for transfers of pseudocoefficients (see \cite[lem. 5.6.1]{Dal22}), it also needs to have an elliptic maximal torus at infinity. The only groups contributing are therefore the $G_2$ and the $\SL_2 \times \SL_2/\{\pm 1\}$.

\subsection{Endoscopic Constants and Normalizations}

 \subsubsection{The $\iota$} Let $H = \SL_2 \times \SL_2/\pm 1$ and let $G_2^c$ be the unique non-split inner form of $G_2$ over $\Q$ which is compact at infinity. 
Then,
\begin{gather*}
\iota(G_2^c, H) = \iota(G_2, H) = |\Lambda(H, \mc H, s, \eta)|^{-1} \f{\tau(G)}{\tau(H)} = 1 \cdot \f12, \\
\iota(G_2^c, G_2) = 1,
\end{gather*}
by Kottwitz's formula for Tamagawa numbers (note that $\ker^1(\Q, Z_H) = \ker^1(\Q, \{\pm 1\})  =1$).

\subsubsection{The transfer factors} \label{transferfactors} We also need to fix transfer factors at all places for both $G_2$ and $G_2^c$ to compute transfers. The computations in \cite{Tai17} demonstrate how to do so explicitly. First, they can be chosen consistently by fixing a global Whittaker datum. The corresponding local Whittaker datum determines the local transfer factors on $G_2$ as in \cite{KS99}. Since $G_2$ is defined over $\Z$, we can choose global data that is unramified/admissible at all finite places with respect to the $G_2(\Z_p)$ so we can use the fundamental lemma at all finite places as in \cite[\S7]{Hal93}.  By \cite[\S4.4]{Kal18}, the Whittaker datum on $G_2$ also gives compatible local transfer factors on $G_2^c$ (note that $G_2$ is simply connected for applying theorem 4.4.1). These allow us to use the fundamental lemma at finite places since transfer factors there stay the same as our choices for $G_2$.

We need to know two things about the Archimedean transfer factors. First, Whittaker normalization lets us use the formulas for discrete transfer from \cite{She10} on both $G_2$ and $G_2^c$. Note here that $G_2^c$ is in particular a pure inner form since $G_2$ is adjoint. This formula is stated in a slightly easier to use form in \cite[\S IV.3]{Lab11} for our case of $\rho_G - \rho_H \in X^*(T)$ (the $\inv(\pi(1), \pi(w))$ of Shelstad is the $\kappa \cdot \eps$ term of Labesse). 

Second, we need to know which element of $\Pi_\disc((k-2)\beta)$ our Archimedean Whittaker datum makes Whittaker-generic. This will have to be $\pi_{(k-2)\beta, 1}$ since our choice of dominant Weyl chamber has all simple roots non-compact and is the only possible such choice up to $\Om_K$ (see the discussion before lemma 4.2.1 in \cite{Tai17}. In fact, there is only one possible conjugacy class of Whittaker datum at infinity by considerations explained there). 

\subsubsection{The stabilizations}
We fix canonical measure at finite places so that the fundamental lemma directly gives $\1_{K^\infty_{G_2}}^H = \1_{K^\infty_H}$. Recall that EP-functions and pseudocoefficients are defined depending on measure so we don't need to fix measure at infinity. 

Then, \eqref{mainformula} gives
\begin{multline}\label{G2form}
I^{G_2}(\varphi_{\pi_{G_2}(s_{\alpha_2}, (k-2) \beta)} \otimes \1_{K^\infty_{G_2}}) \\= S^{G_2}(\eta^{G_2}_{(k-2)\beta} \otimes \1_{K^\infty_{G_2}}) + \f12 S^H((\varphi_{\pi_{G_2}(s_{\alpha_2}, (k-2) \beta)})^H \otimes \1_{K^\infty_{H}}).
\end{multline}
A simple case of the discrete transfer formula in \cite[\S IV.3]{Lab11} computes that $(\eta^{G_2^c}_{(k-2)\beta})^{G_2} = \eta^{G_2}_{(k-2)\beta}$ (note that $\Om_\R(G_2^c) \bs \Om_\C(G_2^c)$ is trivial so $\kappa$ is too), so
\[
I^{G_2^c}(\eta^{G_2^c}_{(k-2)\beta} \otimes \1_{K^\infty_{G_2^c}}) =   S^{G_2}(\eta^{G_2}_{(k-2)\beta} \otimes \1_{K^\infty_{G_2}}) + \f12 S^{H}((\eta^{G^c_2}_{(k-2)\beta})^H \otimes \1_{K^\infty_{H}}).
\]
Since type $A_1 \times A_1$ has no non-trivial centralizer of full semisimple rank, all elliptic endoscopy of $\SL_2 \times \SL_2/\pm 1$ is non-split. Therefore, it is ramified at some prime, so the transfers of  $\1_{K^\infty_H}$ vanish, implying that $S^H = I^H$ on our test functions. Substituting one stabilization into another finally gives:
\begin{multline}\label{stabilization}
I^{G_2}(\varphi_{\pi_{G_2}(s_{\alpha_2}, (k-2) \beta)} \otimes \1_{K^\infty_{G_2}}) = I^{G_2^c}(\eta^{G_2^c}_{(k-2)\beta} \otimes \1_{K^\infty_{G_2^c}})  \\- \f12 I^H ((\eta^{G^c_2}_{(k-2)\beta})^H \otimes \1_{K^\infty_{H}}) + \f12 I^H((\varphi_{\pi_{G_2}(s_{\alpha_2}, (k-2) \beta)})^H \otimes \1_{K^\infty_{H}})
\end{multline}
under canonical measure at finite places. 

This is our realization of method 2.  There are three steps remaining to get counts:
\begin{enumerate}
\item
Compute the transfers of EP-functions to $H$. 
\item
Write the resulting $I^H(\eta_\lb \otimes \1_{K_H})$ terms in terms of counts of level-$1$, classical modular forms.
\item
Look up values for the $G_2^c$-term from \cite{CR15}. 
\end{enumerate}

\section{Real Endoscopic Transfers}\label{Rtrans}
Let $H$ again be the one endoscopic group we care about: $\SL_2 \times \SL_2/\{\pm1\}$. We want to compute $(\varphi_{\pi_{G_2}(s_{\alpha_2}, (k-2) \beta)})^H$ and $(\eta^{G_2^c}_{(k-2)\beta})^H$. By the choices of transfer factors in section \ref{transferfactors}, we may do so by the formulas in \cite[\S IV.3]{Lab11}. 

As a choice for computation that doesn't affect the final result, we realize the roots of $H$ as $2 \eps_1$ and $2 \eps_2$. Orient $X^*(T)$ by setting the 1st quadrant in $\eps_1$ and $\eps_2$ to be $H$-dominant. The Weyl elements $\Om(G, H)$ that send the $G$-dominant chamber to an $H$-dominant one are $\{1, s_{\alpha_1}, s_{\alpha_2}\}$. 



\subsection{Root Combinatorics}
Since $\rho_G - \rho_H \in X^*(T)$, \cite[\S IV.3]{Lab11} gives the transfer of the pseudocoefficient of the quaternionic discrete series to $H$:
\begin{multline}\label{transfer1}
(\varphi_{\pi_{G_2}(s_{\alpha_2}, (k-2) \beta)})^H =\\ \kappa^H(s_{\alpha_2}^{-1}) \eta^H_{(k-2)\beta + \rho_G - \rho_H} - \kappa^H(s_{\alpha_1} s_{\alpha_2}^{-1}) \eta^H_{s_{\alpha_1}((k-2)\beta + \rho_G) - \rho_H} - \eta^H_{s_{\alpha_2}((k-2)\beta + \rho_G) - \rho_H}
\end{multline}
for some signs $\kappa^H(\cdot)$. 

We compute that $\rho_H = \eps_1 + \eps_2$. Then
\[
(k-2)\beta + \rho_G - \rho_H = (k-2)(3 \eps_1 + \eps_2) + (3 \eps_1 + \eps) = 3(k-1) \eps_1 + (k-1) \eps_2.
\]
In addition,
\begin{align*}
s_{\alpha_1} \rho_G = 5\eps_1 + \eps_2, &\qquad s_{\alpha_1} \beta = \beta, \\
s_{\alpha_2} \rho_G = \eps_1 + 3\eps_2, &\qquad s_{\alpha_2} \beta = 2 \eps_2,
\end{align*}
so
\[
s_{\alpha_1}((k-2)\beta + \rho_G) - \rho_H = (k-2)(3 \eps_1 + \eps_2) + (4 \eps_1) = (3k-2) \eps_1 + (k-2) \eps_2
\]
and
\[
s_{\alpha_2}((k-2)\beta + \rho_G) - \rho_H = (k-2)(2\eps_2) + (2 \eps_2) = 2(k-1)\eps_2.
\]

\subsection{Endoscopic Characters}\label{echarcomp}
\subsubsection{Setup}
It remains to compute the $\kappa$ terms in \ref{transfer1}. These signs depend in a very complicated way on the realization of $H$ and the exact transfer factors chosen. We will therefore use an indirect trick to compute them more easily. 

Let $\psi_H$ be a (discrete in our case) $L$-parameter for $H(\R)$ and $\psi_G$ the composition with $\Ld H \into \Ld G_2$. Then we have an identity of traces over $L$-packets:
\[
S \Theta_{\psi_H}(f^H) = \sum_{\pi \in \Pi_{\psi_G}} \langle s_H, \pi \rangle \Theta_\pi(f),
\]
where $f^H$ is a transfer of $f$, $\Theta_\pi$ is the Harish-Chandra character, $S\Theta_{\psi_H}$ is the stable character corresponding to the $L$-packet, $\Pi_{\psi_G}$ is the $L$-packet corresponding to the $L$-parameter, and $\langle s_H, \pi \rangle$ is shorthand for a sign depending on $\pi$ and the choice of Whittaker data. This sign comes from pairing an element $s_H$ of the centralizer of $\psi_G$ determined by $H$ with a character associated to $\pi$ through the Whittaker datum. See \cite[\S1]{Kal16} for an exposition of how this works in general.

If $\pi$ on $G_2$ is discrete series, Labesse's formula tells us that we can choose:
\[
(\varphi^{G_2}_\pi)^H = \sum_\lb \eps(\lb, \pi) \eta^H_\lb
\]
for some signs $\eps$ that depend on the transfer factor and some set of weights $\lb$ that only depends on the infinitesimal character of $\pi$. 

Let $\psi_\lb$ be the $L$-parameter corresponding to weight-$\mu$ discrete series on $H$. Plugging this formula for $\varphi^{G_2}_\pi$ into the trace identity for $\psi_\lb$ gives that 
\[
\eps(\lb, \pi) =  \sum_\mu \eps(\mu, \pi) S \Theta_{\psi_\lb}(\eta_\mu^H) = \sum_{\pi' \in \Pi_{\psi_G}} \langle s_H, \pi' \rangle \Theta_{\pi'}(\varphi^{G_2}_\pi) = \begin{cases} \langle s_H, \pi \rangle & \pi \in \Pi_{\psi_G} \\ 0 & \text{else} \end{cases},
\]
where $\psi_G$ is the pushforward of $\psi_\lb$. The last equality is the definition of pseudocoefficient since all $\pi$ in a packet for an $L$-parameter should be tempered. This computation shows that  $\psi_\lb$ is required to push forward to the parameter for $\pi$ and that $\eps(\lb, \pi) = \langle s_H, \pi \rangle$. 

\subsubsection{The trick}
Now we are ready to compute the signs. Instead of doing the hard work of figuring out how the transfer factor directly affects the signs in Labesse's formulation, we will use the key fact that $\eps(\lb, \pi) = \langle s_H, \pi \rangle = 1$ whenever $\pi$ is the Whittaker-generic member of its $L$-packet. Therefore, in Labesse's formula for the \emph{generic} member $\pi_{1, (k-2)\beta}$,
\begin{multline*}
(\varphi_{\pi_{G_2}(1, (k-2) \beta)})^H = \eta^H_{(k-2)\beta + \rho_G - \rho_H}  + \kappa^H(s_{\alpha_1}) \sgn(s_{\alpha_1}) \eta^H_{s_{\alpha_1}((k-2)\beta + \rho_G) - \rho_H}\\ + \kappa^H(s_{\alpha_2}) \sgn(s_{\alpha_2}) \eta^H_{s_{\alpha_2}((k-2)\beta + \rho_G) - \rho_H},
\end{multline*}
all the coefficients need to be $1$. The allows to solve
\[
\kappa^H(s_{\alpha_1}) = \kappa^H(s_{\alpha_2}) = -1
\]
for our choice of transfer factors. Right-$\Om_\R$-invariance of Labesse's $\kappa$ then also gives that
\[
\kappa^H(s_{\alpha_1}s_{\alpha_2}) = -1. 
\]

\subsection{Final Formulas for Transfers}
Therefore, our final transfer is
\begin{equation}\label{pstransferG2}
(\varphi_{\pi_{G_2}(s_{\alpha_2}, (k-2) \beta)})^H  = -\eta^H_{3(k-1) \eps_1 + (k-1) \eps_2} + \eta^H_{(3k-2) \eps_1 + (k-2) \eps_2} - \eta^H_{2(k-1)\eps_2}.
\end{equation}
Transfers from $G_2^c$ are easier. Here, $\Om_\R(G_2^c) \bs \Om_\C(G_2^c)$ is trivial so the average value of $\kappa$ is $1$. Averaging Labesse's formula as in \cite[cor. 5.1.5]{Dal22} therefore gives:
\begin{equation}\label{eptransferG2}
(\eta^{G_2^c}_{(k-2) \beta})^H  = \eta^H_{3(k-1) \eps_1 + (k-1) \eps_2} - \eta^H_{(3k-2) \eps_1 + (k-2) \eps_2} - \eta^H_{2(k-1)\eps_2}.
\end{equation}




\section{The $H = \SL_2 \times \SL_2 / \pm 1$ term}\label{Hterm}
Here we compute the terms $I^H(\eta_\lb \otimes \1_{K_H})$ for Euler-Poincar\'e functions $\eta_\lb$.  Any $\lb = a \eps_1 + b \eps_2$ is a weight of $H$ if $a+b$ is even. Note first that
\[
I^H(\eta^H_\lb \otimes \1_{K_H}) = \sum_{\pi \in \mc{AR}_\disc(H)} \tr_{\pi_\infty}(\eta^H_\lb) \tr_{\pi^\infty}(\1_{K_H}) = \sum_{\substack{\pi \in \mc{AR}_\disc(H) \\ \pi \text{ unram.}}} \tr_{\pi_\infty}(\eta^H_\lb),
\]
by Arthur's simple trace formula and using our choice of canonical measure at finite places.

To move forward, we need to understand automorphic reps on $H$ by relating them to other groups. Consider the sequence
\[
1 \to \pm 1 \to \SL_2 \times \SL_2 \to H \to 1.
\]
It induces on local or global $F$:
\[
1 \to \pm 1 \to \SL_2 \times \SL_2(F) \to H(F) \to F^\times/(F^\times)^2 \to 1,
\]
using that $H^1(F, \pm 1) = F^\times/(F^\times)^2$ and $H^1(F, \SL_2) = 1$ for the $F$ we care about (the $\R$ case of the second equality comes from the determinant exact sequence on $\GL_2$). The image of $\SL_2 \times \SL_2(F)$ is the connected component $H(F)^0$.

\subsection{Cohomological Representations of $H(\R)$}
Next, we recall that the infinite trace measures an Euler characteristic against $(\mf h, K_{H,\infty})$-cohomology:
\[
\tr_{\pi_\infty}(\eta^H_\lb) = \chi(H^*(\mf h, K_{H,\infty}, \pi_\infty \otimes V_\lb)),
\]
where $\mf h$ is the Lie algebra of $H_\infty$ and $V_\lb$ is the finite dimensional representation of weight $\lb$ of $H^0_\infty$ pulled back to $H_\infty$.  Using the definition from \cite[\S5.1]{BW00},
\[
H^*(\mf h, K_{H,\infty}, \pi_\infty \otimes V_\lb) = H^*(\mf h, K^0_{H,\infty}, \pi_\infty \otimes V_\lb)^{K_{H,\infty}/K^0_{H,\infty}},
\]
it suffices to consider the $\pi_\infty$ whose restrictions to $H_\infty^0$ contain a component that is cohomological when pulled back to $[\SL_2 \times \SL_2](\R)$. By Frobenius reciprocity and semisimplicity of inductions, these are exactly the irreducible constituents of $\Ind_{H_\infty^0}^{H_\infty} \pi'$ for $\pi'$ cohomological of $H_\infty^0$.

Next, $H_\infty^0$ is index $2$ in $H_\infty$. Pick $h \in H_\infty - H_\infty^0$ and let $\pi'^{(h)}$ be the representation $\gamma \mapsto \pi'(h^{-1} \gamma h)$. Define character
\[
\chi : H_\infty \mapsto H_\infty/H_\infty^0 \simeq \{\pm 1\}. 
\]
As noted in the proof of lemma 2.5 in \cite{LL79}, there are two cases for $H_\infty^0$-representations $\pi'$:
\begin{enumerate}
\item
$\pi' \neq \pi'^{(h)}$: then $\Ind_{H_\infty^0}^{H_\infty} \pi'$ is irreducible and $\Res_{H_\infty^0}^{H_\infty}\Ind_{H_\infty^0}^{H_\infty} \pi' = \pi' \oplus \pi'^{(h)}$.
\item
$\pi' = \pi'^{(h)}$: then $\Ind_{H_\infty^0}^{H_\infty} \pi' = V \oplus (V \otimes \chi)$ for some irreducible $V$. 
Also, $\Res_{H_\infty^0}^{H_\infty}\Ind_{H_\infty^0}^{H_\infty} \pi' = \pi' \oplus \pi'$
\end{enumerate}

Recalling a standard result, the cohomological representations of $\SL_2(\R)$ with respect to $\lb$ are:
\begin{itemize}
\item
A discrete series $L$-packet $\{\pi_{\lb,1}$, $\pi_{\lb, s}\}$ (where $\Om_{\SL_2} = \{1, s\}$),
\item
The trivial representation $\1_{\SL_2}$ if $\lb = 0$. 
\end{itemize}
By the K\"unneth rule, cohomological representations of $\SL_2 \times \SL_2(\R)$ are exactly products of those on $\SL_2(\R)$.  Those of $H_\infty^0$ are exactly those of $\SL_2 \times \SL_2(\R)$ that are trivial on $\pm 1$---in other words, with $\lb = a\eps_1+b\eps_2$ and $a+b$ even.

Consider such $\lb$. There are three cases of inductions to consider to compute the cohomological representations of $H_\infty$. Note that conjugation by $h \in H_\infty -H_\infty^0$ swaps the two members of a discrete-series $L$-packet of an embedded $\SL_2$ factor and fixes the trivial representation. 
\begin{itemize}
\item
$a,b \neq 0$: We look at the inductions of products of discrete series. This is case (1) so the $4$ products pair up in sums that are $2$ members of an $L$-packet. These are of course $\pi^H_{\lb,1}$ and $\pi^H_{\lb, s}$ where $s$ is a length-$1$ element of $\Om_H$:
\begin{align*}
\pi^H_{\lb,1}|_{H_\infty^0} &= (\pi_{a \eps_1,1} \boxtimes \pi_{b \eps_2,1}) \oplus (\pi_{a \eps_1,s} \boxtimes \pi_{b \eps_2,s}), \\
 \pi^H_{\lb,s}|_{H_\infty^0} &= (\pi_{a \eps_1,1} \boxtimes \pi_{b \eps_2,s}) \oplus (\pi_{a \eps_1,s} \boxtimes \pi_{b \eps_2,1}).
\end{align*}
\item
Without loss of generality, $a = 0, b \neq 0$: We also need to consider inductions of $\1 \boxtimes \pi_{b \eps_2, \star}$. This is case (1) and both induce to a single irreducible $\sigma^H_{\lb}$:
\[
\sigma^H_\lb|_{H_\infty^0} = (\1 \boxtimes \pi_{b \eps_2,1}) \oplus (\1 \boxtimes \pi_{b \eps_2, s}).
\]
\item
$a = b = 0$: In addition to both the above, we need to consider the induction of $\1_{\SL_2} \boxtimes \1_{\SL_2}$. This is case (2). This trivial representation induces to $\1_{H_\infty} \oplus \chi$ on $H_\infty$. Both factors are cohomological.  
\end{itemize}

Grothendieck group relations stay true restricted to $H_\infty^0$ so we can compute traces against $\eta_\lb$. Recall that in $\SL_2(\R)$:
\[
\1 = I - \pi_{0, 1} - \pi_{0,s},
\]
where $I$ is some parabolically induced representation with trivial trace against $\eta^{\SL_2}_0$. 

First, by our normalization
\[
\tr_{\pi^H_{\lb,1}}(\eta^H_\lb) = \tr_{\pi^H_{\lb,s}}(\eta^H_\lb) = 1/2.
\]
Next, working in $H_\infty^0$ and for $\lb = b \eps_2$:
\[
\1 \boxtimes \pi_{\lb, \star} = ( I - \pi_{0, 1} - \pi_{0,s}) \boxtimes \pi_{\lb,\star} = I \boxtimes \pi_{\lb, \star} - \pi_{0, 1} \boxtimes \pi_{\lb, \star} - \pi_{0,s} \boxtimes \pi_{\lb, \star},
\]
so
\[
\sigma^H_\lb = \1 \boxtimes \pi_{\lb, 1}  + \1 \boxtimes \pi_{\lb, s} = I \boxtimes (\pi_{\lb, 1}  + \pi_{\lb, s}) - \pi^H_{0 + \lb,1} - \pi^H_{0 + \lb, s},
\]
implying
\[
\tr_{\sigma^H_\lb}(\eta^H_\lb) = -1.
\]
Finally
\begin{multline*}
\1 \boxtimes \1 = ( I - \pi_{0, 1} - \pi_{0,s}) \boxtimes ( I - \pi_{0, 1} - \pi_{0,s}) \\
= I \boxtimes I - I \boxtimes (\pi_{0,1} + \pi_{0,s}) - (\pi_{0,1} + \pi_{0,s}) \boxtimes I +  \pi^H_{0 + 0, 1} + \pi^H_{0+ 0, s},
\end{multline*}
so 
\[
\tr_{\1}(\eta^H_\lb) = 1.
\]
Since $\eta_\lb$ is supported on $H^0_\infty$, we similarly have
\[
\tr_{\chi}(\eta^H_\lb) = 1.
\]

In total, our $H$-term becomes a count
\begin{equation}\label{HRrep}
I^H(\eta^H_\lb \otimes \1_{K_H}) = \sum_{\pi \in \mc{AR}_{\disc, \ur}(H)} m_\disc^H(\pi) w^H(\pi_\infty),
\end{equation}
where $w^H$ is a weight
\[
w^H(\pi_\infty) = \begin{cases}
0 & \pi_\infty \text{ not cohomological of weight } \lb \\
1/2 & \pi_\infty \text{ one of the } \pi^H_{\lb, *} \\
-1 & \pi_\infty = \sigma^H_\lb \\
1 & \pi_\infty \text{ trivial  or $\chi$ and } \lb = 0
\end{cases}.
\]
Call the cohomological cases type I, II, and III in order.

\subsection{Reduction to Modular Form Counts}\label{HtoGL2}
We now recall two results from \cite{CR15}. Consider central isogeny $G \to G'$ of semisimple algebraic groups over $\Z$. If $\pi' = \pi'_\infty \otimes \pi'^{\infty}$ is an unramified, discrete automorphic representation of $G'$, let $R(\pi')$ be the set of unitary, admissible representations $\pi = \pi_\infty \otimes \pi^\infty$ of $G(\A)$ that satisfy:
\begin{itemize}
\item $\pi^\infty$ is unramified with set of Satake parameters $c^\infty(\pi^\infty)$ induced from that of $\pi'^\infty$ through $T_{G'}^G : \wh G' \to \wh G$. 
\item $\pi_\infty$ is a constituent of the restriction of $\pi'_\infty$ through $G(\R) \to G'(\R)$. 
\end{itemize}
Note that the size of $R(\pi')$ is the number of constituents of the restriction $\pi'_\infty|_{G(\R)}$. 
\begin{thm}[{\cite[prop. 4.7]{CR15}}]\label{CRthm}
Let $\pi$ be an automorphic representation of $G$.  Then
\[
m_\disc^G(\pi) = \sum_{\pi' : \pi \in R(\pi')} m_\disc^{G'}(\pi') [\pi_\infty, \pi'_\infty],
\]
where $[\pi_\infty, \pi'_\infty]$ is the multiplicity of $\pi_\infty$ in $\pi'_\infty|_{G(\R)}$. 
\end{thm}
We will apply this with $G = H$ and $G' = \PGL_2 \times \PGL_2$. Make similar definitions of type I, II, and III for representations of $[\PGL_2 \times \PGL_2](\R)$. Type I on $\PGL_2 \times \PGL_2$ restricts to the sum over a discrete $L$-packet on $H_\infty$. Type II and III on $\PGL_2 \times \PGL_2$ have irreducible restrictions. These restrictions partition the cohomological representations of $H$ except for $\chi$ so
\[
m_\disc^H(\pi_\infty \otimes \pi^\infty) = \sum_{c^\infty({\pi'}^\infty) \in (T_{G'}^H)^{-1}(c^\infty(\pi^\infty))} m_\disc^{G'}(\pi'_\infty \oplus {\pi'}^\infty)
\]
when $\pi_\infty \subseteq \pi'_\infty|_{H_\infty}$ and the multiplicity is $0$ when $\pi_\infty = \chi$. Now, we sum over the constituents of $\pi'_\infty|_{H_\infty}$ and the possible values of $c^\infty(\pi^\infty)$, noting that $T_{G'}^H$ is surjective. This gives:

\begin{cor}
\[
\sum_{\pi \in \mc{AR}_{\disc, \ur}(H)} m_\disc^H(\pi) w^H(\pi_\infty) = \sum_{\pi \in \mc{AR}_{\disc, \ur}(G')} m_\disc^{G'}(\pi) w^{G'}(\pi_\infty),
\]
where $w^{G'}$ is the weight
\[
w^{\PGL_2 \times \PGL_2}(\pi_\infty) = \begin{cases}
1 & \pi_\infty \text{ type I}\\
-1 & \pi_\infty \text{ type II}\\
1 & \pi_\infty \text{ type III}
\end{cases}
\]
that only differs from $w^H$ by multiplying the type I case by two. 
\end{cor}

%

Let $\mc S_k(1)$ be the set of normalized, level-$1$, weight-$k$ cuspidal (new)eigenforms. If $\lb = a\eps_1 + b \eps_2$, then type I representations on $\PGL_2 \times \PGL_2$ correspond to pairs in $S_{a+2}(1) \times S_{b+2}(1)$. Type II is a single form times the trivial representation and Type I is only the trivial representation.

\subsection{Final Formula for $S^H$}
Therefore, if
\[
S_k = |\mc S_k(1)|,
\]
we get:
\begin{equation}\label{SHformula}
I^H(\eta^H_{a \eps_1 + b \eps_2} \otimes \1_{K_H}) = (S_{a+2} - \1_{a = 0})(S_{b+2} - \1_{b = 0}),
\end{equation}
using canonical measure at finite places. By a classical formula (\cite[Thm. 3.5.2]{DS05} for example),
\begin{align*}
S_{a+2} &= \begin{cases} 0 & a+2 = 2 \text{ or } a+2 \text{ odd}\\
 \lfloor \f{a+2}{12} \rfloor - 1 & a+2 \equiv 2 \pmod{12} \\
\lfloor \f{a+2}{12} \rfloor & \text{else} \\
\end{cases}
.
\end{align*}

\section{A Jacquet-Langlands-style result}\label{JL} 

\subsection{First Form}
Generalizing \eqref{stabilization} slightly and substituting in \eqref{pstransferG2} and \eqref{eptransferG2} gives:
\begin{multline}\label{JLtf}
I^{G_2}(\varphi_{\pi_k} \otimes f^\infty) = I^{G_2^c}(\eta^{G_2^c}_{(k-2)\beta} \otimes f^\infty)  - I^H (\eta^H_{(3k-3)\eps_1+ (k-1)\eps_2} \otimes (f^\infty)^H) \\
+ I^H (\eta^H_{(3k-2)\eps_1 + (k-2)\eps_2} \otimes (f^\infty)^H).
\end{multline}
for any unramified function $f^\infty$ (we use here that $(G_2^c)^\infty =(G_2)^\infty$). This will let us describe the set $\mc Q_k(1)$ for $k > 2$ in terms of certain representations of $G_2^c$ and $H$.

Choose $\pi = \pi_k \otimes \pi^\infty \in \mc Q_k(1)$. Since $\pi^\infty$ is unramified, it can be described by a sequence of Satake parameters: for each prime $p$, a semisimple conjugacy class $c_p(\pi^\infty) \in [\wh {G_2}]_\ssm$ (note that $G_2$ is split so we don't need to worry about the full Langlands dual and see \cite[\S3.2]{ST16} for full background).

The endoscopic datum for $H$ also gives an embedding $\wh H \into \wh{G_2}$ (noting again that everything is split) whose image contains a chosen maximal torus and therefore induces a map 
\[
T^{G_2}_H : [\wh H]_{\ssm} \onto [\wh {G_2}]_{\ssm}.
\]
The fibers of this map are $\Om_{G_2}$-orbits of conjugacy classes in $H$ and have size $3$ at $G_2$-regular elements. 

\begin{prop}
Let $k > 2$ and $\pi^\infty$ an unramified representation of $(G_2)^\infty$. Then
\begin{multline*}
m^{G_2}_\disc(\pi_k \otimes \pi^\infty) = m^{G_2^c}_\disc(V_{(k-2)\beta} \otimes \pi^\infty) - \f12 |S^H(\pi^\infty, (3k-3)\eps_1+ (k-1)\eps_2)|  \\
+ \f12 |S^H(\pi^\infty, (3k-2)\eps_1+ (k-2)\eps_2)|.
\end{multline*}
Recall here that $V_\lb$ is the finite dimensional representation of $G_2^c$ with highest weight $\lb$. Also, $S^H(\pi^\infty, \lb)$ is the multiset of $\pi_\infty \otimes \pi_1^\infty \in \mc{AR}_\disc(H)$ with multiplicity such that:
\begin{itemize}
\item $\pi_\infty \in \Pi^H_\disc(\lb)$,
\item For all $p$, $c_p(\pi_1^\infty) \in (T^{G_2}_H)^{-1}(c_p(\pi^\infty))$. 
\end{itemize}
\end{prop}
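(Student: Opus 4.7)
The plan is to start from the trace formula identity \eqref{JLtf}, spectrally expand all four terms, and then extract the coefficient of a given $\pi^\infty$ by letting $f^\infty$ range over the unramified Hecke algebra and invoking linear independence of Satake characters. The left side of \eqref{JLtf} is handled directly by corollary \ref{G2spec}, giving $\sum_{\pi} m^{G_2}_\disc(\pi) \delta_{\pi_\infty = \pi_k} \tr_{\pi^\infty}(f^\infty)$.

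First I would expand the $G_2^c$ term. Since $(G_2^c)_\R$ is compact, the $L$-packet $\Pi^{G_2^c}_\disc((k-2)\beta)$ has a single element, namely the finite-dimensional irreducible $V_{(k-2)\beta}$, and $\eta^{G_2^c}_{(k-2)\beta}$ is its pseudocoefficient. Via an Arthur-style simple trace formula argument identical to the one in the proof of corollary \ref{G2spec} (Vogan's lemma trivially applies because every irreducible is basic here), we get
\[
I^{G_2^c}(\eta^{G_2^c}_{(k-2)\beta} \otimes f^\infty) = \sum_{\pi^\infty} m^{G_2^c}_\disc(V_{(k-2)\beta} \otimes \pi^\infty) \tr_{\pi^\infty}(f^\infty),
\]
using the identification $(G_2^c)(\A^\infty) = G_2(\A^\infty)$.

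Next I would expand the two $H$-terms. For $k > 2$ both weights $\lambda_1 = (3k-3)\eps_1 + (k-1)\eps_2$ and $\lambda_2 = (3k-2)\eps_1 + (k-2)\eps_2$ have both coordinates strictly positive, so they fall into the ``type I'' case of \S\ref{HtoGL2}: each $L$-packet $\Pi^H_\disc(\lambda_i)$ has two members, each with $\tr(\eta^H_{\lambda_i}) = 1/2$. The simple trace formula and the computation $w^H(\pi^H_\infty) = 1/2$ from \S\ref{HtoGL2} yield
\[
I^H(\eta^H_{\lambda_i} \otimes (f^\infty)^H) = \tfrac{1}{2} \sum_{\pi^H_\infty \in \Pi^H_\disc(\lambda_i)} \sum_{\pi^{H,\infty}} m^H_\disc(\pi^H_\infty \otimes \pi^{H,\infty}) \tr_{\pi^{H,\infty}}((f^\infty)^H).
\]
The fundamental lemma at every finite place (which applies by the transfer factor normalization fixed in \S\ref{endoscopy}) converts the unramified transfer into a Satake-pullback: $\tr_{\pi^{H,\infty}}((f^\infty)^H) = \tr_{\sigma^\infty}(f^\infty)$ for any unramified $\sigma^\infty$ of $G_2$ with Satake parameters $c_p(\sigma^\infty) = T^{G_2}_H(c_p(\pi^{H,\infty}))$. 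Grouping the sum over $\pi^{H,\infty}$ by the image Satake class $c(\pi^\infty) := T^{G_2}_H(c(\pi^{H,\infty}))$, the contribution from weight $\lambda_i$ becomes $\tfrac{1}{2} \sum_{\pi^\infty} |S^H(\pi^\infty, \lambda_i)| \tr_{\pi^\infty}(f^\infty)$.

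Substituting everything into \eqref{JLtf} reduces the proposition to matching the coefficients of $\tr_{\pi^\infty}(f^\infty)$ on both sides as $f^\infty$ varies over the spherical Hecke algebra. The main subtlety, and my expected obstacle, is the legitimacy of this ``coefficient extraction'': globally distinct unramified $\pi^\infty$'s may share Satake parameters at every prime, so one should think of both sides as distributions on the product of local spherical Hecke algebras, equivalently as measures on $\prod_p [\widehat{G_2}]_\ssm$. By linear independence of Satake characters $\tr_{c}(\cdot)$ for distinct collections $c = (c_p)_p$ of Satake parameters (restricted to finitely many primes at a time, then letting the ramification set exhaust), we can match coefficients class-by-class, and the multiplicity $m^{G_2}_\disc(\pi_k \otimes \pi^\infty)$ depends only on the Satake parameters of $\pi^\infty$ after we sum over multiplicity-one ambiguities. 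This yields exactly the claimed identity, once one observes that the $|S^H(\pi^\infty, \lambda_i)|$ count is by construction indexed by Satake parameters on $H$ rather than abstract representations, so matches the Hecke-algebra-level identity.
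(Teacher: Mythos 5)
Your route is the same as the paper's: spectrally expand both sides of \eqref{JLtf} (corollary \ref{G2spec} for $G_2$, compactness for $G_2^c$, the type I/II/III analysis of \S\ref{Hterm} for $H$ — correctly noting that for $k>2$ both coordinates of the two $H$-weights are positive, so only type I enters, each packet member with weight $\f12$), use the full fundamental lemma to rewrite $\tr_{\pi^{H,\infty}}((f^\infty)^H)$ as $f^\infty$ evaluated at $T^{G_2}_H$ of the Satake parameters, and then separate Satake data by varying $f^\infty$ over the spherical Hecke algebra. Two points, however, need repair before this is complete.

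First, your grouping step writes the weight-$\lambda_i$ contribution as $\f12\sum_{\pi^\infty}|S^H(\pi^\infty,\lambda_i)|\tr_{\pi^\infty}(f^\infty)$, but the spectral expansion actually produces $\f12\sum_{\pi^\infty}\bigl(\sum_{\pi\in S^H(\pi^\infty,\lambda_i)} m^H_\disc(\pi)\bigr)\tr_{\pi^\infty}(f^\infty)$. Replacing the multiplicity-weighted sum by the cardinality requires $m^H_\disc(\pi)\in\{0,1\}$ for the relevant unramified representations of $H=\SL_2\times\SL_2/\pm 1$; the paper invokes exactly this fact (it follows from multiplicity one for $\SL_2\times\SL_2$ via theorem \ref{CRthm}), and without it the identity as stated, with $|S^H(\cdot,\cdot)|$, does not follow. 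Second, your ``coefficient extraction'' worry is aimed at a non-issue and misses the real one: an unramified $\pi^\infty$ is determined as an abstract representation by its Satake parameters, so there is no ambiguity to ``sum over''; what you actually need is that only finitely many Satake sequences occur among level-$1$ discrete representations of $G_2$, $G_2^c$, and $H$ with the prescribed archimedean components (Harish-Chandra finiteness). Given that finiteness, linear independence of the finitely many distinct Hecke characters — or, as the paper does, an explicit Weyl-invariant unramified $f^\infty$ equal to $1$ on the sequence $c_p(\pi^\infty)$ and $0$ on the other finitely many sequences — finishes the argument; your gloss of ``restrict to finitely many primes and let the ramification set exhaust'' does not by itself control an a priori infinite spectral sum with coefficients of both signs.
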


\begin{proof}
This is a standard Jacquet-Langlands-style argument. Through the Satake isomorphism, each $f_p$ can be thought of as a function $[\wh{G_2}]_{\ssm} \to \C$ through $f_p(c_p(\pi)) = \tr_{\pi_p}(f_p)$. It is in fact a Weyl-invariant regular function on a maximal torus in $\wh{G_2}$. The full version of the fundamental lemma (see the introduction to \cite{Hal95} for example) shows that
\[
f_p^H(c_p) = f_p(T^{G_2}_H(c_p))
\]
for all $c_p \in \wh H$. 

There are only finitely many sequences $c_p(\pi_1^\infty)$ and $T^{G_2}_H(c_p(\pi_1^\infty))$ for $\pi_1^\infty$ the unramified finite component of an automorphic representation either:
\begin{itemize}
\item of $G_2$ with infinite part $\pi_k$,
\item of $G_2^c$ with infinite part $V_{(k-2)\beta}$,
\item or of $H$ with infinite part in $\Pi_\disc((3k-3)\eps_1+ (k-1)\eps_2)$ or $\Pi_\disc((3k-2)\eps_1+ (k-2)\eps_2)$. 
\end{itemize} 
Therefore we can choose an $f^\infty$ that is $0$ on all of these sequences $c_p(\pi_1^\infty)$ except $1$ on exactly the sequence $c_p(\pi^\infty)$ (this reduces to finding Weyl-invariant polynomials on $(\C^\times)^2$ that take specified values on certain Weyl orbits). The result follows from plugging this $f^\infty$ into equation \eqref{JLtf}, noting that the $w^H$ from equation \eqref{HRrep} is always $1/2$ in the relevant cases.
\end{proof}

\subsection{In terms of Modular Forms}
We can use the argument from section \ref{HtoGL2} to reduce the $H$-multiplicity terms to $\PGL_2$-multiplicity ones.

First, we have a map on conjugacy classes 
\[
T^{ H}_{\PGL_2 \times \PGL_2} : [\wh {\PGL_2 \times \PGL_2}]_{\ssm} \onto [\wh H]_{\ssm}.
\]
Since the first group is $\SL_2 \times \SL_2(\C)$, the fibers of this map are of the form $\{c, -c\}$ for some $c \in [\SL_2 \times \SL_2(\C)]_\ssm$. Composing then gives map
\[
T^{G_2}_{\PGL_2 \times \PGL_2} : [\wh {\PGL_2 \times \PGL_2}]_{\ssm} \onto [\wh{G_2}]_{\ssm}.
\]
This allows us to define $S^{\PGL_2 \times \PGL_2}(\pi^\infty, \lb)$ analogous to $S^H(\pi^\infty, \lb)$ for all $\lb = a \eps_1 + b \eps_2$ with both $a$ and $b$ even. For indexing purposes, set it to be empty when $a$ and $b$ aren't even. 

Formula \eqref{SHformula} also gives us that $S^H(\pi^\infty, a \eps_1 + b \eps_2) = \emptyset$ when $a$ and $b$ aren't both even. Recall from \S\ref{HtoGL2} that the restriction of discrete series $\pi^{\PGL_2 \times \PGL_2}_\lb$ to $H(\R)$ has as components the two members of the $L$-packet $\Pi_\disc^H(\lb)$. Therefore a similar analysis using theorem \ref{CRthm} shows that 
\[
|S^H(\pi^\infty, \lb)| = 2|S^{\PGL_2 \times \PGL_2}(\pi^\infty, \lb)|.
\]

Finally, $\PGL_2$ is a quotient of $\GL_2$ by a central torus with trivial Galois cohomology, so automorphic representations on $\PGL_2$ are just those on $\GL_2$ with all components having trivial central character. Recalling injection
\[
\iota : [\SL_2 \times \SL_2(\C)]_\ssm \into [\GL_2 \times \GL_2(\C)]_\ssm,
\]
this gives:
\begin{cor}\label{JLG2}
Let $k > 2$ and $\pi^\infty$ an unramified representation of $(G_2)^\infty$. Then
\begin{multline*}
m^{G_2}_\disc(\pi_k \otimes \pi^\infty) = m^{G_2^c}_\disc(V_{(k-2)\beta} \otimes \pi^\infty) - |S^{\GL_2 \times \GL_2}(\pi^\infty, (3k-3)\eps_1+ (k-1)\eps_2)|  \\
+  |S^{\GL_2 \times \GL_2}(\pi^\infty, (3k-2)\eps_1+ (k-2)\eps_2)|.
\end{multline*}
Recall here that $V_\lb$ is the finite dimensional representation of $G_2^c$ with highest weight $\lb$. Also, $S^{\GL_2 \times \GL_2}(\pi^\infty, \lb)$ is the set of $\pi_\infty \otimes \pi_1^\infty \in \mc{AR}_\disc(\GL_2 \times \GL_2)$ such that:
\begin{itemize}
\item $\pi_\infty$ is the discrete series $\pi^{\GL_2 \times \GL_2}_\lb$,
\item For all $p$, $c_p(\pi_1^\infty) = \iota(c'_p)$ for some $c'_p \in (T^{G_2}_{\PGL_2 \times \PGL_2})^{-1}(c_p(\pi^\infty))$. Here $\iota$ is the map $ [\SL_2 \times \SL_2(\C)]_\ssm \into [\GL_2 \times \GL_2(\C)]_\ssm$. 
\end{itemize}
\end{cor}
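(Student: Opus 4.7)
The plan is to pass from the preceding proposition, which already expresses $m^{G_2}_\disc(\pi_k \otimes \pi^\infty)$ as a signed sum involving $m^{G_2^c}_\disc$ and $\f12 |S^H(\pi^\infty, \lb)|$, to the desired formulation by absorbing the factors of $1/2$ via two central isogenies: first $H \to \PGL_2 \times \PGL_2$, and then $\PGL_2 \times \PGL_2$ versus the trivial-central-character part of $\GL_2 \times \GL_2$. The whole argument runs parallel to section \ref{HtoGL2} but now tracks individual automorphic representations rather than weighted counts.

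First I would apply theorem \ref{CRthm} to the central isogeny $H \to \PGL_2 \times \PGL_2$. Given $\pi' \in \mc{AR}_{\disc, \ur}(\PGL_2 \times \PGL_2)$ with $\pi'_\infty$ the (singleton) discrete series $\pi^{\PGL_2 \times \PGL_2}_\lb$, the restriction to $H(\R)$ is determined by the component-group analysis already carried out in section \ref{HtoGL2}. The essential input is that this restriction contains the entire $L$-packet $\Pi^H_\disc(\lb)$, so that $R^H_{\PGL_2 \times \PGL_2}(\pi')$ consists of exactly two automorphic representations of $H$ sharing the same unramified finite component. On the dual side, compatibility of Satake parameters factors through $T^{G_2}_{\PGL_2 \times \PGL_2} = T^{G_2}_H \circ T^H_{\PGL_2 \times \PGL_2}$, so the partition of theorem \ref{CRthm} restricts to a partition of $S^H(\pi^\infty, \lb)$ by the subsets $R^H_{\PGL_2 \times \PGL_2}(\pi')$ indexed by $\pi' \in S^{\PGL_2 \times \PGL_2}(\pi^\infty, \lb)$. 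Since each such fiber has size $2$, this yields $|S^H(\pi^\infty, \lb)| = 2\,|S^{\PGL_2 \times \PGL_2}(\pi^\infty, \lb)|$, canceling the halves in the preceding proposition.

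The second step is the identification of $\PGL_2 \times \PGL_2$-automorphic representations with $\GL_2 \times \GL_2$-automorphic representations having trivial central character. Since $\GL_2 \times \GL_2 \to \PGL_2 \times \PGL_2$ is a quotient by the central torus $\Gm^2$ and Hilbert 90 gives $H^1(F, \Gm) = 0$ at every place, each $\pi'$ on $\PGL_2 \times \PGL_2$ lifts uniquely to a $\pi''$ on $\GL_2 \times \GL_2$ with trivial central character, preserving the infinite component (both packets are singletons) and with Satake parameters related by the inclusion $\iota \colon [\SL_2 \times \SL_2(\C)]_\ssm \into [\GL_2 \times \GL_2(\C)]_\ssm$. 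This produces a bijection $S^{\PGL_2 \times \PGL_2}(\pi^\infty, \lb) \leftrightarrow S^{\GL_2 \times \GL_2}(\pi^\infty, \lb)$ in the sense of the corollary statement. Substituting into the previous proposition produces the desired formula.

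The main potential obstacle is verifying the decisive $L$-packet claim of the first step: that $\pi^{\PGL_2 \times \PGL_2}_\lb|_{H(\R)}$ really exhausts $\Pi^H_\disc(\lb)$ and splits into exactly $|\Pi^H_\disc(\lb)|$ irreducibles. This is essentially a computation with connected components of the real points and $L$-packet sizes, closely resembling the case analysis of section \ref{HtoGL2} for the restriction from $\GL_2 \times \GL_2$ to $\SL_2 \times \SL_2$, but one must check carefully that the quotient by $\{\pm 1\}$ (rather than $\{\pm 1\}^2$) yields exactly a factor-of-two discrepancy on the automorphic-spectrum side. Once this is pinned down, everything else is essentially formal.
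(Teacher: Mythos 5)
Your argument is correct and is essentially the paper's own proof: it starts from the preceding proposition, uses theorem \ref{CRthm} for the central isogeny $H \to \PGL_2 \times \PGL_2$ (with the restriction of $\pi^{\PGL_2\times\PGL_2}_\lb$ exhausting $\Pi^H_\disc(\lb)$) to get $|S^H(\pi^\infty,\lb)| = 2\,|S^{\PGL_2\times\PGL_2}(\pi^\infty,\lb)|$, and then identifies $\PGL_2\times\PGL_2$-representations with trivial-central-character representations of $\GL_2\times\GL_2$ via Hilbert 90, exactly as in section \ref{HtoGL2} and the paper's deduction of corollary \ref{JLG2}. The only detail you leave implicit (as does the paper, which waves at formula \eqref{SHformula}) is the parity degenerate case, i.e.\ that $S^H(\pi^\infty, a\eps_1+b\eps_2)$ is empty when $a,b$ are not both even, so both sides vanish there.
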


Of course, since all infinite factors in sight are discrete series, we may again replace the $m_\disc$ by $m_\cusp$ using \cite{Wal84}. 

Note of course that $S^{\GL_2 \times \GL_2}(\pi^\infty, a\eps_1 + b \eps_2) = \emptyset$ unless both $a$ and $b$ are even. Therefore, we can interpret this as, for $k > 2$:
\begin{itemize}
\item
If $k$ is even: $\mc Q_k(1)$ is the corresponding set of representations transferred from $G_2^c$ \emph{in addition to} representations transferred from pairs of cuspidal eigenforms in $\mc S_{3k}(1) \times \mc S_{k}(1)$.  
\item
If $k$ is odd: $\mc Q_k(1)$ is the corresponding set of representations transferred from $G_2^c$ \emph{except for} representations that are also transferred from pairs of cuspidal eigenforms in $\mc S_{3k-1}(1) \times \mc S_{k+1}(1)$.
\end{itemize}

Results for level $>1$ would be a lot more complicated since formula \eqref{stabilization} would have many further hyperendoscopic terms and the comparison to $\GL_2 \times \GL_2$ would not work as nicely.

\section{Counts of forms}\label{finalcount}
\subsection{Formula in terms of $I^{G_2^c}$}
To get counts instead of a list, combining formulas \eqref{ssideG2},\eqref{JLtf}, and \eqref{SHformula} gives that 
\begin{multline}\label{finalformula}
|\mc Q_k(1)| = I^{G_2^c}(\eta^{G_2^c}_\lb \otimes \1_{K^\infty_{G_2^c}}) - (S_{3k-1} -\1_{3k-3 = 0})(S_{k+1} -\1_{k-1 = 0}) \\+ (S_{3k} -\1_{3k-2 = 0})(S_{k} -\1_{k-2 = 0}),
\end{multline}
where $S_k$ as before represents the count of classical modular forms of weight $k$. 

Substituting in the formulas for $S_k$, for $k > 2$:
\begin{multline*}
|\mc Q_k(1)| = I^{G_2^c}(\eta_\lb \otimes \1_{K^\infty_{G_2^c}}) \\+
\begin{cases}
\lfloor \frac k4 \rfloor \lf( \lfloor \frac k{12} \rfloor - 1 \ri) & k \equiv 2 \pmod{12} \\
\lfloor \frac k4 \rfloor  \lfloor \frac k{12} \rfloor   & k \equiv 0,4,6,8,10 \pmod{12} \\
-\lf(\lfloor \frac {3k-1}{12}  \rfloor - 1\ri) \lf( \lfloor \frac {k+1}{12} \rfloor - 1 \ri) & k \equiv 1 \pmod{12} \\
-\lf(\lfloor \frac {3k-1}{12}  \rfloor - 1\ri) \lfloor \frac {k+1}{12} \rfloor  & k \equiv 5,9 \pmod{12} \\
-\lfloor \frac {3k-1}{12} \rfloor \lfloor \frac {k+1}{12} \rfloor & k \equiv 3,7,11 \pmod{12} \\
\end{cases}.
\end{multline*}

\subsection{Computing $I^{G_2^c}$}
The group $G_2^c(\R)$ is compact so the $I^{G_2^c}$ term takes a very simple form: $L^2(G_2^c(\Q) \bs G_2^c(\A))$ decomposes as a direct sum of automorphic representations and the EP-functions $\eta_\lb$ are just scaled matrix coefficients of the finite-dimensional representations $V_\lb$ with highest weight $\lb$ on $G_2^c(\R)$. Therefore
\[
I^{G_2^c}(\eta_\lb \otimes \1_{K^\infty_{G_2^c}}) = \sum_{\pi \in \mc{AR}(G_2^c)} \1_{\pi_\infty = V_\lb} \tr_{\pi^\infty}(\1_{K^\infty_{G_2^c}}),
\]
which is just counting the number of unramifed automorphic reps of $G_2^c$ that have infinite component $V_\lb$. 

For reader convenience, we now explain in detail an argument well known to experts. Since unramified representations have $1$-dimensional spaces of $K^\infty$-fixed vectors, taking $K^\infty_{G_2^c}$ invariants sends each such $\pi$ to a linearly independent copy of $V_\lb$ that together span the $V_\lb$-isotypic component of 
\[
L^2(G_2^c(\Q) \bs G_2^c(\A) / K^\infty_{G_2^c}) = L^2(G_2^c(\Z) \bs G_2^c(\R)) \subseteq L^2(G_2^c(\R)). 
\]
By Peter-Weyl, $L^2(G_2^c(\R))$ has $V_\lb$-isotypic component $V_\lb^{\oplus \dim V_\lb}$. In fact, this component for both the left- and right-actions is the same subspace. Therefore the number of copies of $V_\lb \subseteq L^2(G_2^c(\Z) \bs G_2^c(\R))$ is $\dim \lf(V_\lb^{G_2^c(\Z)}\ri)$ by a dimension count.

Summarizing:
\begin{equation}\label{IG2c}
I^{G_2^c}(\eta_\lb \otimes \1_{K^\infty_{G_2^c}}) = \dim \lf(V_\lb^{G_2^c(\Z)}\ri). 
\end{equation}
A PARI/GP 2.5.0 program in the online appendix to \cite{CR15} computes this for all $\lb$ by pairing the trace character of $V_\lb|_{G_2(\Z)}$ with the trivial character.

An explicit paper formula for this computation is more-or-less written out in an honors thesis of Steven Sullivan \cite{Sul13}. Sullivan writes out the traces of all $16$ conjugacy classes in $G_2^c(\Z)$ against $V_{(k-2) \beta}$ as polynomials of $k$ with coefficients that are sums of $k$th powers of $7$th, $8$th, and $12$th roots of unity. This gets a polynomial expression for the trace character pairing and therefore $I^{G_2^c}(\eta_\lb \otimes \1_{K^\infty_{G_2^c}})$ in cases $\pmod{168}$.  Simplifications in Mathematica give a reasonable closed-form version in section \ref{explicit}. 

It is important to note here that getting the explicit descriptions and sizes of the conjugacy classes in $G_2^c(\Z)$ was non-trivial and required some trickery in both Sullivan's and Chenevier-Ta\"ibi's computations. This step would be an obstacle to any generalizations.

\subsection{Table of Counts}
Table \ref{tabledims} gives values of $|\mc Q_k(1)|$ for $k=3$ to $52$ produced by formula \eqref{finalformula} and \cite{CR15}'s table for formula \eqref{IG2c}. The lowest-weight example is bolded, although this work does not rule out the existence of an example with weight $2$ or weight $1$ (as defined by \cite[\S1.1]{Pol20}). 

\begin{table}[ht]
\centering
\caption{Counts of discrete, quaternionic automorphic representations of level $1$ on $G_2$.}
\begin{tabular}{cc||cc||cc||cc||cc} \toprule
$k$ & $|\mc Q_k(1)|$ & $k$ & $|\mc Q_k(1)|$ & $k$ & $|\mc Q_k(1)|$ & $k$ & $|\mc Q_k(1)|$ & $k$ & $|\mc Q_k(1)|$ \\ \midrule
 3 & 0 & 13 & 5 & 23 & 76 & 33 & 478 & 43 & 1792 \\
 4 & 0 & 14 & 13 & 24 & 126 & 34 & 610 & 44 & 2112 \\
 5 & 0 & 15 & 8 & 25 & 121 & 35 & 637 & 45 & 2250 \\
 \textbf 6 & \textbf 1 & 16 & 23 & 26 & 175 & 36 & 807 & 46 & 2619 \\
 7 & 0 & 17 & 17 & 27 & 173 & 37 & 849 & 47 & 2790 \\
 8 & 2 & 18 & 37 & 28 & 248 & 38 & 1037 & 48 & 3233 \\
 9 & 1 & 19 & 30 & 29 & 250 & 39 & 1097 & 49 & 3447 \\
 10 & 4 & 20 & 56 & 30 & 341 & 40 & 1332 & 50 & 3938 \\
 11 & 1 & 21 & 50 & 31 & 349 & 41 & 1412 & 51 & 4201 \\
 12 & 9 & 22 & 83 & 32 & 460 & 42 & 1686 & 52 & 4780 \\ \bottomrule
\end{tabular}
\label{tabledims}
\end{table}.

\subsection{Explicit Formula}\label{explicit}

For the reader's amusement, we build off the work of \cite{Sul13} to present a closed-form formula for $|\mc Q_k(1)|$ that fits in a few lines:
\begingroup
\addtolength{\jot}{1em}
\begin{align*}
&|\mc Q_{n+2}(1)| = \\
& \f1{12096} \f1{120}(n+1)(3n+4)(n+2)(3n+5)(2n+3) + \f1{216}\f16 (n+1) (n+ 2 ) (2 n+3) \\ 
&+ \f5{192}\f18 
\begin{cases}
(n+2) ( 3 n+4) & n = 0 \pmod 2 \\
-(n+1) (3 n+5)& n = 1 \pmod 2 \\
\end{cases} 
+ \f1{18}\begin{cases}
\f{2n}3 + 1& n = 0 \pmod 3 \\
 - \lfloor \f n3  \rfloor - 1 & n = 1,2 \pmod 3
\end{cases} \\
&+ \f1{32}\begin{cases}
\f{3n}2 + 10& n = 0 \pmod 4 \\
 6\lfloor \f{n}4  \rfloor - 4 & n = 1 \pmod 4 \\
  - 2\lfloor \f{n}4  \rfloor - 2 & n = 2,3 \pmod 4
\end{cases} 
 + \f1{24} \begin{cases}
3\lfloor \f{n}6 \rfloor+ 5& n = 0,1 \pmod 6 \\
3 \lfloor \f{n}6  \rfloor -2 & n = 2,3 \pmod 6 \\
3  \lfloor \f{n}6  \rfloor +3 & n = 4,5 \pmod 6
\end{cases} \\
&+ \f17 \begin{cases}
1 & n = 0 \pmod 7 \\
-1 & n = 4 \pmod 7 \\
0 & n = 1,2,3,5,6 \pmod 7  
\end{cases}  
+ \f14\begin{cases}
1 & n = 0 \pmod 8 \\
-1 & n = 5 \pmod 8 \\
0 & n = 1,2,3,4,6,7 \pmod 8
\end{cases} \\
&+ \begin{cases}
\lfloor \frac {n+2}4 \rfloor \lf( \lfloor \frac {n+2}{12} \rfloor - 1 \ri) & n = 0 \pmod{12} \\
\lfloor \frac {n+2}4 \rfloor  \lfloor \frac {n+2}{12} \rfloor   & n = 2,4,6,8,10 \pmod{12} \\
-\lf(\lfloor \frac {3n+5}{12}  \rfloor - 1\ri) \lf( \lfloor \frac {n+3}{12} \rfloor - 1 \ri) & n = 11 \pmod{12} \\
-\lf(\lfloor \frac {3n+5}{12}  \rfloor - 1\ri) \lfloor \frac {n+3}{12} \rfloor  & n = 3,7 \pmod{12} \\
-\lfloor \frac {3n + 5}{12} \rfloor \lfloor \frac {n+3}{12} \rfloor & n = 1,5,9 \pmod{12} \\
\end{cases}.
\end{align*}
\endgroup

\bibliographystyle{amsalpha}
\bibliography{/Users/Rahul/Documents/Rahul/Tbib}

\end{document}